\newcommand{\by}{\bm{y}}
\newcommand{\bt}{\bm{t}}
\newcommand{\bv}{\bm{v}}
\newcommand{\y}{\by}  
\newcommand{\0}{\bm{0}} 
\newcommand{\f}{\bm{f}}
\newcommand{\w}{\bm{w}}
\newcommand{\bphi}{\boldsymbol{\varphi}}
\newcommand{\btheta}{\boldsymbol{\theta}}
\newcommand{\bxi}{\boldsymbol{\xi}}
\newcommand{\bnu}{\boldsymbol{\nu}}
\newcommand{\D}{\mathcal{D}} 
\newcommand{\M}{\mathcal{M}} 
\newcommand{\K}{\mathcal{N}} 
\newcommand{\V}{\mathcal{P}} 
\newcommand{\T}{\mathcal{T}} 
\newcommand{\U}{\mathcal{U}} 
\newcommand{\bbA}{{\color{blue}{\mathbf{A}}}}  
\newcommand{\bbAt}{{\color{blue}{\widetilde{\mathbf{A}}}}}  
\newcommand{\bbIU}{{\color{red}{\bbI_{\U}}}}  
\newcommand{\bbPN}{{\color{Green}{\mathbf{P}_{\K}}}} 
\newcommand{\bbPNt}{{\color{Green}{\widetilde{\mathbf{P}}_{\K}}}} 
\newcommand{\bbB}{\mathbf{B}}
\newcommand{\bbE}{\mathbf{E}}
\newcommand{\bbI}{\mathbf{I}}
\newcommand{\bbM}{\mathbf{M}}
\newcommand{\bbP}{\mathbf{P}}
\newcommand{\bbX}{\mathbf{X}}
\newcommand{\MTM}{\bbM_{\T\M}}
\newcommand{\tMTM}{\widetilde{\bbM}_{\T\M}}
\newcommand{\MTU}{\bbM_{\T\U}}
\newcommand{\MVM}{\bbM_{\V\M}}
\newcommand{\MVU}{\bbM_{\V\U}}
\DeclareMathOperator{\rank}{rank}
\DeclareMathOperator{\Tr}{Tr}
\newcommand{\trp}{^{\mathsf{T}}}  
\newcommand{\ev}{\mathbb{E}} 
\newcommand{\C}{\mathbb{C}}
\newcommand{\R}{\mathbb{R}}
\newcommand{\N}{\mathbb{N}}
\newtheorem{theorem}{Theorem}[section]
\newtheorem{corollary}[theorem]{Corollary}
\newtheorem{proposition}[theorem]{Proposition}
\newtheorem*{definition}{Definition}
\begin{document}

\title{eGAD! double descent is \underline{e}xplained by \underline{G}eneralized \underline{A}liasing \underline{D}ecomposition    }

\author{Mark K.~Transtrum}
\email{mktranstrum@byu.edu}
\author{Gus L.~W.~Hart}
\affiliation{Department of Physics and Astronomy, Brigham Young University, Provo, Utah 84602, USA}
\author{Tyler J.~Jarvis}
\author{Jared P.~Whitehead}
\affiliation{Department of Mathematics, Brigham Young University, Provo, Utah 84602, USA}

\keywords{Double descent $|$ Benign overfitting $|$ Risk $|$ bias--variance trade-off $|$ Interpolation $|$ Implicit bias}

\begin{abstract}

A central problem in data science is to use potentially noisy samples of an unknown function to predict function values for unseen inputs.
In classical statistics, the predictive error is understood as a trade-off between the bias and the variance that balances model simplicity with its ability to fit complex functions.
However, over-parameterized models exhibit counterintuitive behaviors, such as ``double descent'' in which models of increasing complexity exhibit \emph{decreasing} generalization error.  
Other models may exhibit more complicated patterns of predictive error with multiple peaks and valleys. 
Neither double descent nor multiple descent phenomena are well explained by the bias--variance decomposition.

We introduce a novel decomposition that we call the \emph{generalized aliasing decomposition} (GAD) to explain the relationship between predictive performance and model complexity.
The GAD decomposes the predictive error into three parts: 1.) \emph{model insufficiency}, which dominates when the number of parameters is much smaller than the number of data points, 2.) \emph{data insufficiency}, which dominates when the number of parameters is much greater than the number of data points, and 3.) \emph{generalized aliasing}, which dominates between these two extremes.

We demonstrate the applicability of the GAD to diverse applications, including random feature models from machine learning, Fourier transforms from signal processing, solution methods for differential equations, and predictive formation enthalpy in materials discovery. 
Because key components of the generalized aliasing decomposition can be explicitly calculated from the relationship between model class and samples without seeing any data labels, it can answer questions related to experimental design and model selection \emph{before} collecting data or performing experiments.
We further demonstrate this approach on several examples and discuss implications for predictive modeling and data science.
\end{abstract}

\maketitle

\section{Introduction}
\label{sec:introduction}

Predictive models allow scientists and engineers to extend data and anticipate outcomes for unseen cases.  A key issue for these models is the problem of how to understand and minimize the generalization error.  Traditionally, scientists think about generalization error in terms of a trade-off between bias and variance, but that trade-off does not readily predict the error curves for many models, especially models with more parameters than data points and models involving highly structured scientific and engineering data.   In this work, we introduce a new decomposition, the \emph{generalized aliasing decomposition (GAD)}, that explains a wide variety of error curves in predictive models for both small (classical) models and for large, over-parametrized models.  This decomposition explains complex generalization curves, including double and multiple descent, and can be used to inform the choice of model and experimental design (training points) to control, reduce, or even minimize generalization error.

Some of the fundamental choices when model building are (1) the sample data and (2) the complexity of the model class.
Simple models are generally preferred for many reasons, including interpretability and computational expense 
\cite{goldenfeld1999simple,hoel2013quantifyin,crutchfield2014dreams,transtrum2015perspective,mattingly2018maximizing,chvykov2021causal,quinn2022information}, but one of the more pragmatic justifications for parsimony is a desire to balance over- and under-fitting as understood through the bias--variance decomposition.
Models with few parameters avoid making wild predictions but under fit the observed data without much fidelity (high bias), while over-parameterized models fit the sampled data well with wild swings in between data points (high variance).
The unquestioned goal has been to find the ``sweet spot'' of model complexity that balances bias and variance, i.e., a faithful model of moderate complexity (see Figure~\ref{fig:Figure 1}, left panel) that minimizes the so-called ``risk'', that is, errors made by model predictions on unseen data.

\begin{figure*}
\begin{center}
    \includegraphics[width=\textwidth]{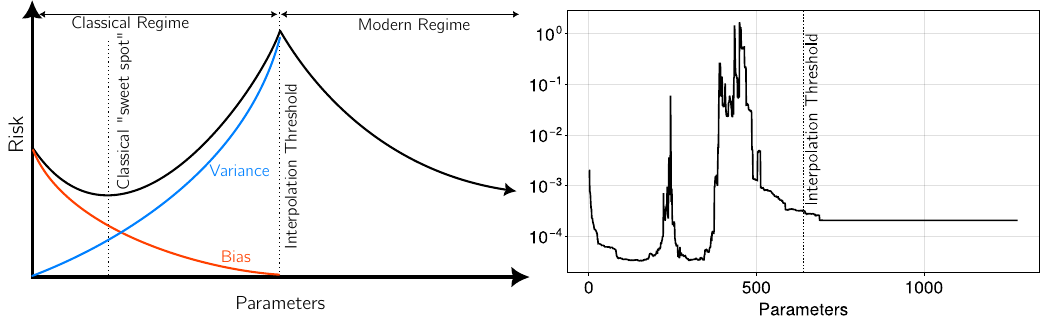}
\end{center}
\caption{\textbf{Limitations of the Bias--Variance Trade-off.}
  Left: Bias and variance are traditionally understood as monotonically decreasing/increasing contributions to risk to be balanced by tuning model complexity.
  Double descent illustrates a breakdown of this intuition beyond the interpolation threshold where variance and bias can exhibit counter-intuitive dependence on model class.
  Right: Highly structured data such as a cluster expansion of alloy formation enthalpy exhibit even more complicated and counter-intuitive dependence on model complexity, not easily explained using the bias--variance trade-off. 
      }
\label{fig:Figure 1}
\end{figure*}

While the foregoing story has long been the 
standard way to approach these problems, we now know this view of the fitting problem is not the whole story.
For extremely over-parameterized models (i.e., more parameters than samples), prediction errors may actually \emph{decrease} with additional parameters, a phenomenon often called ``double descent'' \cite{BelkinOriginal}, summarized by the left panel in Figure~\ref{fig:Figure 1}.
The boundary between the two regimes, where there are as many parameters as data points, is known as the \emph{interpolation threshold}, because it is (generically)  the boundary of where the model can perfectly interpolate the training data, but below that threshold interpolation cannot occur.
Non-convex risk curves (such as with double and multiple descent) are most famously recognized in neural networks \cite{YangYuYouSteinhardtMa2020,DarMuthukumarBaraniuk2021}, though this behavior has been observed in other settings as well.
(See \cite{GemanBienenstockDoursat1992} for the bias--variance decomposition for neural networks, \cite{HastieSurprises} for ordinary least-squares regression, and \cite{LoogVieringMeyKrijtheTax2020} for a thorough review.)
Furthermore, models and data sets can be designed to exhibit complex, multiple descent  \cite{BelkinMultiple, NakkiranKaplunBansalYangBarakSutskever2021,dAscoliRefinettiBiroliKrzakala2020,AdlamPennington2020,LeeCherkassky2022,OnetoRidellaAnguita2022,schaeffer2023double,LafonThomas2024}.

Models of highly structured data from scientific and engineering applications often exhibit similar multiple descent behavior.
For example, the risk curve in the right panel of Figure~\ref{fig:Figure 1} comes from a cluster expansion model of the formation enthalpy of alloy structures (see section \ref{sec:realworld} for more detail) in materials science.
Not only do the peaks and troughs appear \emph{to the left} of the interpolation threshold where classical bias--variance arguments ought to apply, the na\"ive interpolation threshold apparently plays no role.

Traditional data analysis techniques are also at odds with the intuition of the bias--variance decomposition.
The discrete Fourier transform, for example, is formally equivalent to a regression problem (see section~\ref{sec:Fourier}) with as many parameters (Fourier coefficients) as data, so
bias--variance arguments suggest that the inferred Fourier coefficients should exhibit unreasonable sensitivity to noisy data.
In spite of this, the fast Fourier transform which efficiently computes the discrete Fourier transform precisely at the interpolation threshold, is one of the most influential and widely used algorithms in all of science and engineering (even for noisy signals that are not band-limited).
Furthermore, techniques such as pseudospectral and collocation methods for solving differential equations are similarly equivalent to regression problems (see Section~\ref{sec:collocation}) but are known to exhibit optimal performance at or beyond the interpolation threshold \cite{boyd2001chebyshev}.

While the bias--variance decomposition holds as a formal mathematical result, these examples expose the limited insight it provides.
Its utility derives from the incorrect expectation that model selection balances the trade-off between monotonically decreasing (bias) and monotonically increasing (variance) error contributions.
In reality, the contributions of bias and variance for each of the preceding examples are non-monotonic, complex, and intimately connected with the algorithmic solution to the optimization problem.

Recent work has begun to explain these behaviors, often focusing on regression and the simplest case of double descent, although risk curves may be far more complicated \cite{BelkinMultiple}.
In \cite{AdlamPennington2020,schaeffer2023double}, the bias--variance decomposition is expanded to explain this non-convex behavior, relying on the interplay between the model design and the actual data.
Several other efforts have been made to clarify the relationship between the model class, inherent algorithmic bias, the split between testing and training data, and the appearance of non-monotonic loss and generalization curves.
We do not present a comprehensive summary of these efforts, but direct the interested reader to \cite{neal2018modern,dAscoliRefinettiBiroliKrzakala2020,NakkiranKaplunBansalYangBarakSutskever2021,LafonThomas2024}, 
which clarify the nature of double descent and its apparent reliance on the structure of the testing and training data sets.

In contrast to these approaches, we build on insights from signal processing \cite{DarMuthukumarBaraniuk2021}
and introduce a new decomposition (Eq.~\eqref{eq:GAD}), which we refer to as the \emph{generalized aliasing decomposition} (GAD),
summarized for the generic case of double descent in the left panel of Figure~\ref{fig:AliasingDecomposition}.
The aliasing decomposition explains generic risk curves in both the classical and modern regimes as the contribution of three terms: 1.) model insufficiency, 2.) data insufficiency, and 3.) generalized aliasing.

\begin{figure*}
  \includegraphics[width=\textwidth]{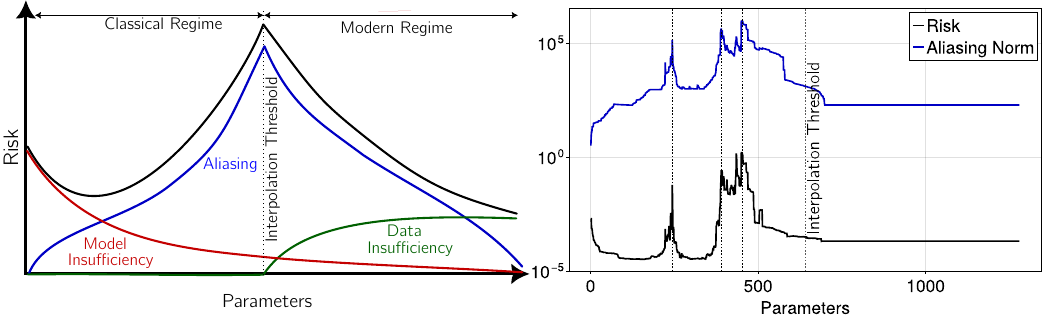}
  \caption{
    \textbf{Generalized Aliasing Decomposition.}
    Left: The generalized aliasing decomposition (GAD) expresses risk as the contribution of three terms: model insufficiency, data insufficiency, and generalized aliasing.
    Model insufficiency dominates for small models and decreases monotonically with the number of parameters.
    Data insufficiency dominates for large models, increasing monotonically with the number of parameters.
    Generalized Aliasing accounts for non-convex intermediate behaviors but has a single peak at the interpolation threshold for the generic case, accounting for the phenomenon of Double Descent.
    Right: For highly structured problems (see Figure~\ref{fig:Figure 1}, right), aliasing explains all of the non-convex behavior of generic risk curves at intermediate model sizes.    
      }
\label{fig:AliasingDecomposition}
\end{figure*}

\emph{Model insufficiency} quantifies the inability of the model to fit the data (red curve in the left panel of Figure~\ref{fig:AliasingDecomposition}).
It is usually the dominant error contribution for models with few parameters, and it decreases monotonically with the number of parameters.
Though the mapping is not exact, it roughly corresponds to ``bias'' in the bias--variance paradigm.
Adding more parameters to a model does not limit the ability of the model to fit data, so it decreases monotonically as we prove in Section~\ref{sec:model_insufficiency}.

\emph{Data insufficiency} quantifies how much model parameters are unconstrained by available data (green curve in the left panel of Figure~\ref{fig:AliasingDecomposition}).
It is the dominant error contribution for models with many excess parameters, increasing monotonically as the model grows.
Intuitively, adding more parameters does not introduce any additional parametric constraints, and we show this contribution does not increase with additional parameters.
However, adding parameters generically also imposes additional data requirements, so data insufficiency generally increases with the number of parameters (see Section~\ref{sec:data_insufficiency}).

Finally, \emph{generalized aliasing} explains all non-monotonic behavior in the intermediate regime (blue curve in the left panel of Figure~\ref{fig:AliasingDecomposition}).
The name derives from the special case of Fourier aliasing.  
When high-frequency (noisy) components of a signal cannot be distinguished from low-frequency components at finite sampling rates,  high-frequency (unmodeled) contributions are said to \emph{alias} with the low-frequency (modeled) components, corrupting the representation.

In the generic case, aliasing errors are maximized at the interpolation threshold and cause double descent (Figure~\ref{fig:AliasingDecomposition} left).
In structured cases such as the cluster expansion example of Section~\ref{sec:realworld}, aliasing also fully accounts for the complicated, non-monotonic behavior throughout both the classical and modern regimes (Figure~\ref{fig:AliasingDecomposition}, right).

As we demonstrate below, the GAD provides the intuition behind best practices for other analysis techniques, such as pseudospectral approaches to solving differential equations.
Although the contribution of generalized aliasing is non-monotonic in the number of parameters, we show its behavior is easily intuited.
In Section~\ref{sec:realworld}, we use this fact to explain the complicated risk curve in the right panel of Figure~\ref{fig:Figure 1}.

Taken collectively, \emph{the three components of the GAD explain all the qualitative features of generic risk curves}.
The GAD further clarifies the roles of model structure, data sampling, data labels, and the learning algorithm.  
Indeed, a useful feature of the decomposition is that much of the analysis can be done independently of data labels. 
Consequently, the GAD facilitates key modeling decisions such as the choice of model class, experimental design, regularization, and learning algorithm.

\section{The Generalized Aliasing Decomposition (GAD)}
\label{sec:derivation}

In this section we give the details of the GAD and its mathematical justification.  
We begin with establishing notation to describe the regression problem, define the decomposition, and then describe how the decomposition influences the error or risk of the fitting problem.  Several explicit examples and applications are given in Section~\ref{sec:applications}.

 Readers wishing to focus on the examples and discussion should first read Sections~\ref{sec:aliasing} and \ref{sec:results} where the GAD is defined, but can safely skip over Sections~\ref{sec:GAD_overview}--\ref{sec:overmodeling} and all but the first paragraph of Section~ \ref{sec:data_insufficiency}.

\subsection{Mathematical Preliminaries}
\label{sec:aliasing}

In regression, data $\y$ are given at samples of an independent variable $t$ and usually decomposed as the sum of an unknown signal $f_{\btheta}(t)$ parameterized by $\btheta$ and noise $\xi$:
\begin{align}
  \label{eq:regression}
  y_i & = f_{\btheta}(t_i) + \xi_i,
\end{align}
where the subscript $i$ refers to a particular data sample.
In standard statistical practice, one next chooses a functional form for the model $f_{\btheta} (t)$ and and ansatz for the distribution of the noise $\xi_i$.
In regression, by far the most common assumption is that the noise terms are Gaussian distributed which leads to a least squares regression.
For linear regression the signal is a linear combination of basis functions $f(t) = \sum_j \Phi_j(t) \theta_j$, so that the fundamental regression equation \eqref{eq:regression} becomes
\begin{align}
  \label{eq:linearregression}
  \y & = \bbM \btheta + \bxi,
\end{align}
where the design matrix $\bbM$ is composed of samples of basis functions, $\bbM_{ij} = \Phi_j(t_i)$.

As an illustration consider a polynomial fit on an interval $[a,b]$, and
take the basis functions to be the usual monomial basis $\{1, t, t^2, t^3, \dots, t^d\}$ for some $d>0$ \footnote{Although fitting monomials is the canonical pedagogical example, the ill conditioning of this Vandermonde matrix makes this basis ill-suited for practical applications.}; so $\Phi_j = t^{j-1}$, and the design matrix $\bbM$ is 
\begin{align}
    \label{eq:Vandermonde}
\bbM = \begin{pmatrix}
1 & t_1 & t_1^2 & \dots & t_1^d\\
1 & t_2 & t_2^2 & \dots & t_2^d\\
\vdots & & & & \vdots\\
1 & t_n & t_n^2 & \dots & t_n^d
    \end{pmatrix}.
\end{align}

Inferred parameter values $\hat{\btheta}$ are found by inverting the design matrix.
Since $\bbM$ is generally not square, an appropriate pseudoinverse $\bbM^+$ is used:  $ \hat{\btheta} = \bbM^+ \y$.
The Moore--Penrose pseudoinverse is the standard choice, corresponding to the least squares loss, for linear regression\cite{HumpherysJarvisEvans2017}, including in this motivating example.
Other cases may require an algorithmic solution, but common algorithmic choices, such as stochastic gradient descent, are known to produce similar norm-minimizing solutions (see \cite{sheng2013iterative,gower2017randomized,schaeffer2023double}, for example).

Finally, predictions at unobserved values of the independent variable are constructed
\begin{align}\label{eq:estimated-regression-function}
  \hat{y}(t) & = \sum_j \Phi_j(t) \hat{\btheta}_j.
\end{align}
An important quantity of interest for validation is the squared error, averaged over a (typically theoretical) distribution of all of the data, not just the training samples. The expectation of the squared error is called \emph{generalization error} or \emph{population risk}  
\begin{equation}\label{eq:pop_risk}
    R_{\btheta}(\hat\y) =  \ev [(\y - \hat\y)^2],
\end{equation} where the dependence of $R_{\btheta}$ on the model class and training data  is implicit.  A primary goal in data science is to identify the model class and degree of complexity that minimizes this risk  \eqref{eq:pop_risk}.

\subsection{Generalized Aliasing}\label{sec:results}

With a common vocabulary established, we now derive the aliasing operator that underpins the generalized aliasing decomposition (GAD).
We no longer require that the data points $t$ lie in $\R$; they could belong to any set $\Omega$.  But we assume that the model functions $\Phi_j:\Omega \to \R$ may be extended to form a complete set, meaning that the true function $y(t)$ (both signal $f$  and noise $\xi$) can be uniquely expressed as a convergent series 
\begin{align}
  y(t) &= \sum_j \Phi_j(t) \theta_j.
\end{align}
 on the entire domain, not just on the training points.  
For the example of polynomial regression, if $y(t)$ is a real analytic  function, then the infinite monomial basis $\{1,t,t^2,\dots\}$ is complete and an appropriate extension for this example.
Said more formally, the noisy signal $y(t)$ is an abstract vector $\y$ in a (potentially infinite-dimensional) vector space $\D$ expressed in some Schauder basis $\{\Phi_j\}_{j\in \N}$ as
\begin{align}\label{eq:define-M}
  \y & = \bbM \btheta,
\end{align}
where $\bbM$ is a bounded linear transformation mapping the vector $\btheta$ in the parameter space $\Theta$ to $\y$ in the data space $\D$. 
In the case of fitting a polynomial on an interval $[a,b]$, the operator $\bbM$ could be thought of as a generalized Vandermonde matrix with countably (infinite) many rows corresponding to rational points of $[a,b]$ and countably (infinite) many columns corresponding to $t^j$ for each nonnegative integer $j$.\footnote{Any real analytic function is determined by its values on a dense set, so we may limit ourselves to only considering rational points $t$ in the interval $[a,b]$.}

Performing linear regression on samples of $y(t)$ and making predictions at unobserved values of $t$ corresponds to partitioning data space $\D$ into a direct sum $\T \oplus \V$ of training $\T$ and prediction $\V$ subspaces. We write $\by$ in this decomposition as $\by = (\by_\T, \by_\V)$.  We assume that $\T$ has finite dimension $n$, but $\V$ need not be finite dimensional. The learning problem is this: Given observations in $\by_\T$, predict the components of $\by_\V$.

In practice, this is done by similarly partitioning the representation space $\Theta = \M \oplus \U$ into a modeled $\M$ and an unmodeled $\U$ subspace  so that $\btheta = (\btheta_\M, \btheta_\U)$, implicitly assuming that $\btheta_\U$ are negligible.  We usually assume that $\M$ has finite dimension $m$ (we have $m = d+1$ for polynomials of degree at most $d$), but $\U$ need not be finite dimensional. 
With these partitions, the relationship of Eq.~\eqref{eq:define-M} between data and coordinates takes the block representation described in the definition below.
\begin{definition}
Denote the decomposition of the labeled data as
\begin{align}
  \label{eq:partition}
  \left(
  \begin{array}{c}
    \by_\T \\
    \by_\V
  \end{array}
  \right) & =
            \left(
            \begin{array}{cc}
              \MTM & \MTU \\
              \MVM & \MVU \\
            \end{array}
            \right)            
  \left(
    \begin{array}{c}
    \btheta_\M \\
    \btheta_\U
  \end{array}
  \right),
\end{align}
where the linear transformation $\MTM:\R^m \to \R^n$ is the usual \emph{design matrix}.
\end{definition}

By explicitly recognizing the unmodeled blocks in the definition, we emphasize that some contributions to the signal $y(t)$ will remain unknown to us (noise, for example).
Essentially, we acknowledge that our model is a subspace of a universal function space which will in turn allow us to reason about the relationship between the modeled and the unmodeled spaces.  
The significance of this decomposition, as opposed to simply treating unmodeled signal as noise, is discussed further in section~\ref{sec:nescience}.
We emphasize that with this definition $\bbM$ does not denote the classical design matrix.  Rather the block $\MTM$ is the classical design matrix, and  $\bbM$  represents a full basis transformation (with both modeled and unmodeled basis functions) on the complete signal (including both seen and unseen data).
Because $\bbM$ is bounded and linear, the subblocks $\MTU$, $\MVM$, and $\MVU$ are also bounded linear transformations.

In the case of fitting a polynomial of degree at most $d$ on $n$ training points $t_1,\dots t_n$, the design matrix (upper left block) $\MTM$ is the Vandermonde matrix in \eqref{eq:Vandermonde}
and the unmodeled training (upper right) block is the semi-infinite matrix 
\[
\MTU = 
\begin{pmatrix}
t_1^{d+1} & t_1^{d+2} & \dots \\
t_2^{d+1} & t_2^{d+2} & \dots \\
  \vdots &  \vdots \\
t_n^{d+1} & t_n^{d+2} & \dots \\
\end{pmatrix}.
\]
The rows of the lower blocks $\MVM$ and $\MVU$ correspond to the prediction points,  $t_\V= [a,b]\setminus \{t_1,\dots, t_n\}$ (again, a countable dense subset of points in $[a,b]\setminus \{t_1,\dots, t_n\}$ suffices).  The columns of the lower left block $\MVM$ correspond to the monomials $1,t,t^2,\dots, t^d$ (spanning the space $\M$) evaluated at the points $t_\V$. 
The columns of the lower right block $\MVU$ correspond to the unmodeled monomials $t^{d+1}, t^{d+2}, \dots$ (spanning $\U$), evaluated at points $t_\V$. 

We learn the modeled parameters $\hat{\btheta}_\M$ using some pseudoinverse $\MTM^+$ of the design matrix $\MTM$:
\begin{align}
  \label{eq:thetahat}
  \hat{\btheta}_\M & = \MTM^+ \,\by_\T.
\end{align}
Inferring only $\hat{\btheta}_\M$ is equivalent to assuming that the unmodeled parameters vanish, so $\hat{\btheta}_\U = \0$ and $\hat{\btheta} = (\hat{\btheta}_\M, \0)$. 
However, the true representation of the training data $\by_{\T}$ includes contributions from both the modeled and unmodeled components of $\btheta$:
\begin{align}
\label{eq:regression_nescience}
\by_\T = \MTM \btheta_\M + \MTU \btheta_\U.
\end{align}
The unmodeled term $\MTU \btheta_\U$ corresponds to the noise in Eq.~\eqref{eq:regression}.
Rather than assume a particular distribution for the noise as one does in standard statistical practice, we leave the unmodeled term arbitrary and study the sensitivity of inference to the presence of unmodeled noise.
The inferred parameters $\hat{\btheta}_\M$ are distorted by the unmodeled term, which, in our extended representation, takes the form:
\begin{align}
  \label{eq:thetahatvstrue1}
  \hat{\btheta}_\M & = \left(\MTM^+ \MTM\right) \btheta_\M + \left(\MTM^+ \MTU \right) \btheta_\U.
\end{align}
For conceptual clarity, we write this as 
\begin{align}
  \hat{\btheta} = \left(
            \begin{array}{c}
              \hat{\btheta}_\M \notag\\
              0
            \end{array}
            \right)
            & = \left(
            \begin{array}{cc}
              \MTM^+ \MTM & \MTM^+ \MTU \\
              0 & 0
            \end{array}
            \right)
            \btheta \\
          & = \left(
    \begin{array}{cc}
      \bbB & {\bbA} \\
              0 & 0
            \end{array}
            \right)
    \btheta,
\end{align}
where we have defined
\begin{equation}\label{eq:bbA}
    \bbA  =  \MTM^+ \MTU \quad\text{ and }\quad \bbB = \MTM^+ \MTM,
\end{equation}
and $\btheta$ is the vector of parameters that represents the complete signal precisely.

We call ${\bbA}$ the \emph{generalized aliasing operator}.  It quantifies how the effects of the unmodeled parameters $\btheta_\U$ are redirected (aliased) into the modeled parameters.
Note that ${\bbA}$ depends not only on the partition between modeled parameters and unmodeled modes, but also on the partition between training points and prediction points and the choice of pseudoinverse or the choice of learning algorithm, more generally.

In the familiar example of Fourier series, the concept of \emph{aliasing} refers to the distortion of a low-frequency signal by high-frequency modes.
Expressed in the form we have described, Fourier aliasing is found from $\bbA = \MTM^+ \MTU$, expressed in the Fourier basis for uniform samples (see Section~\ref{sec:Fourier} for an example of aliasing in Fourier series), where it can be expressed in closed-form.  
Generalizing beyond the specific concept of frequency, $\bbA$ quantifies how unmodeled components affect the signal at the sampled points, leading to a misrepresentation of the inferred modeled parameters that we call \emph{generalized aliasing}.

Using $\hat{\btheta} = (\hat{\btheta}_\M, \0)$, we reconstruct the inferred signal over both training and prediction points
\begin{align}
  \label{eq:predictions}
  \hat{\by} & = \bbM \hat{\btheta}  = \bbM \MTM^+ \by_\T 
  = \bbM ( \bbB \btheta_\M  + \bbA \btheta_\U).
\end{align}
Comparing  $\hat{\by}$ with the true 
$  \by $,
the GAD decomposes the population risk of Eq.~\eqref{eq:pop_risk} into an intuitive partition.
Assuming that the points $t$ are drawn from a uniform distribution on $\Omega$, the risk is
\begin{align}
R_{\btheta}(\hat{\by}) &=  \ev_t [(\by(t) - \hat{\by}(t))^2]
= \sum_{t} (\by(t) - \hat{\by}(t))^2\notag\\
& = \|\by - \hat{\by}\|^2  \label{eq:ey}  \\
& = \left \| \bbM
    \begin{pmatrix}
           {{\bbI_{\M} - \bbB}} & -\bbA \\
            0 &  \bbIU
    \end{pmatrix}
     \btheta \right\|^2, \label{eq:GAD-risk}
\end{align}
where the norm $\|\cdot\|$ is the $2$-norm $\|\cdot\|_2$, and ${\bbI_{\M}}$ and $\bbIU$ are the identity operators on $\M$ and $\U$, respectively.
This  motivates the definition of the \emph{parameter error operator}
\begin{align}
  \bbE_{\btheta} = \left(
          \begin{array}{cc}
            \bbPN & {-\bbA} \\
            0 & \bbIU
          \end{array}
  \right),
  \label{eq:GAD}
\end{align}
where we define
\[
\bbPN = \bbI_\M - \bbB.
\]
We have used the subscript $\btheta$  on $\bbE_{\btheta}$ to indicate that $\bbE_{\btheta} \btheta = \btheta - \hat{\btheta}$ represents errors in the inferred parameters; whereas the errors in the signal are $\by-\hat{\by} = \bbM \bbE_{\btheta} \btheta$.  The notation $\bbPN = \bbI_\M - \bbB$ is motivated  by the fact that it is the orthogonal projection onto the kernel $\K$ of $\MTM$ (see Proposition~\ref{prop:DI-is-projection}, below).

We call the block decomposition in Eq.~\eqref{eq:GAD} the \emph{generalized aliasing decomposition}, or \emph{GAD}, for short.
We call the three nonzero blocks of $\bbE_{\btheta}$ \emph{data insufficiency} $\bbPN$, \emph{model insufficiency} $\bbIU$, and \emph{generalized aliasing} $-\bbA$.  The effect on the signal of these operators acting on the parameters $\btheta$ are depicted in the left panel of Figure~\ref{fig:AliasingDecomposition}.
These terms also have a geometric interpretation in the parameter space illustrated in Figure~\ref{fig:GADGeometry}.
We now analyze each of these contributions to the error in turn.

\begin{figure}
    \centering
    \includegraphics[width=\columnwidth]{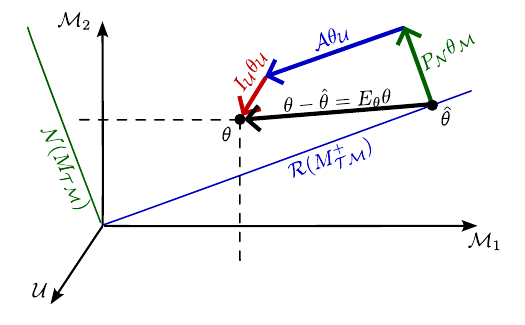}
    \caption{\textbf{Geometry of the GAD}.
      The three terms of the GAD decompose the error $\bbE_{\btheta} \btheta = \btheta - \hat{\btheta}$ in the inferred parameters  into three orthogonal components.
      Working backwards from the true parameters $\theta$ to the inferred parameters $\hat{\theta}$, (subtracting off) the model insufficency $\bbIU \theta_\U$ projects the full $\theta$ into the modeled subspace (two dimensional in this figure, corresponding to the axes $\M_1$ and $\M_2$).
      The aliasing $\bbA \theta_\U$  perturbs in the range $\mathcal{R}(\MTM^+)$ of $\MTM^+$.
      Finally, the data insufficiency $\bbPN \theta_M$ chooses the minimal norm solution by shifting through the kernel $\mathcal{N}(\MTM)$ of $\MTM$.
    } 
    \label{fig:GADGeometry}
\end{figure}

\subsection{Error Analysis}
\label{sec:error}

For a given partition of the parameter space between modeled  and unmodeled subspaces $\Theta = \M\oplus \U$, the predictions $\hat{\by}$  and the risk $R_{\btheta} (\hat{\by}))  = \|  \hat{\by} - \by\|^2$ depend on the choice $T = \{t_1, \dots, t_n\}$ of the training points.  
The expected value  
 of the risk (taken over the distribution of $T$) is often decomposed into a sum of a \emph{bias} term and a \emph{variance} term; see, for example, \cite[\S 20.1]{Wasserman}.
In many settings, however, it is more natural to analyze the risk $\|\by-\hat{\by}\|^2$ and its expected value $\ev_T\!\left[\|\by-\hat{\by}\|^2\right]$ directly through the GAD.  

In this Section we are primarily interested in estimating the risk for a particular decomposition.
Observe that the risk is bounded by the (square of the) product of three norms
 \[
R_{\btheta}(\hat{
\by}) = \|\bbM \bbE_{\btheta} \btheta\|^2 \le \|\bbM \|^2 \|\bbE_{\btheta}\|^2  \|\btheta\|^2.
 \]
 The norm used on the linear transformations $\bbM$ and $\bbE_{\btheta}$ is the \emph{induced} norm, defined as 
\[
\|\bbM\| = \max_{\|\bnu\|=1} 
\|\bbM \bnu\|
\qquad \text{and} \qquad
\|\bbE\| = \max_{\|\bnu\|=1} 
\|\bbE \bnu\|.
\]
In the finite-dimensional case (where the transformations are represented by matrices) it is well known that when the norm on both the domain and range of a matrix is the usual (two-) norm, then the induced norm of a matrix is its largest singular value. In this case the induced norm is often called the \emph{spectral norm}.  

We assume the transformation $\bbM$ has a bounded norm and note that its norm is independent of the choice of model $\M$ and of the choice $T$ of training points.  The risk certainly depends on $\btheta$ and its norm, particularly in the two extremes of low and high model complexity (left and right end, respectively, of the plots shown here).  In the intermediate regime however the risk is generally dominated by the aliasing component of the GAD which is mostly independent of the $\btheta$ themselves.

Since we are particularly interested in how these contributions depend on the number of model parameters, consider the situation where the columns of $\bbM$ are fixed, and a given decomposition  $\Theta = \M \oplus \U$ of parameter space is changed by moving one basis element $\Phi$ out of the space $\U$ and into the space $\M$.  This corresponds to moving the corresponding column $\bphi$ out of the matrix $\MTU$ and into the design matrix $\MTM$. Fixing the order of the columns of $\bbM$, and letting $m$ denote the dimension of the modeled space $\M$, we introduce the notation $\MTM(m)$ and $\MTU(m)$ to make explicit the dependence of the block operators on the dimension $m$. That is, $\MTM(m)$ denotes the design matrix in the case that the first $m$ columns of $\bbM$ are assigned to $\MTM$ and the remaining columns are assigned to $\MTU$.
With this convention, we now analyze each of the elements of the GAD in turn.

\subsubsection{Data Insufficiency \texorpdfstring{$\bbPN$}{PN}}\label{sec:data_insufficiency}

Data insufficiency refers to the upper left block $\bbPN$ of Eq.~\eqref{eq:GAD} and its effect on the parameters $\btheta$.
In the case that $\MTM^+$ is the Moore--Penrose pseudo inverse of $\MTM$, the operator $\bbPN$ is the orthogonal projection of $\M$ onto the null space $\K \subseteq \M$ of $\MTM$, as the following proposition shows.
\begin{proposition}\label{prop:DI-is-projection}
If $\MTM^+$ is the Moore--Penrose pseudoinverse of $\MTM$, then the data insufficiency operator $\bbPN = \bbI_\M - \bbB$ is the orthogonal projection of $\M$ to the kernel $\K$ of $\MTM$.
\end{proposition}
\begin{proof}
Let 
\[
\MTM = [U_1 | U_2] 
    \begin{bmatrix}
    \Sigma_1 & 0 \\
    0        & 0
    \end{bmatrix}
\begin{bmatrix}
    V_1\trp\\
    V_2\trp
\end{bmatrix}
\] be the full SVD of $\MTM$, where $\MTM$ has rank $r$, the matrix $\Sigma_1$ is invertible of shape $r\times r$, and $V = [V_1 | V_2]$ is an orthogonal matrix 
\[
V_1 V_1\trp + V_2 V_2\trp = V V\trp = \bbI_{\M},
\]
with $V_1$ having $r$ columns and the columns of $V_2$ spanning the kernel $\K$ of $\MTM$. 
The Moore--Penrose pseudoinverse of $\MTM$ can be written as 
$\MTM^+ = V_1 \Sigma_1^{-1} U_1\trp$, which gives  
\begin{align*}
\bbI_\M - \bbB & = V V\trp - \MTM^+ \MTM\\
& = (V_1 V_1\trp + V_2 V_2\trp) - V_1 \Sigma_1^{-1} \Sigma_1 V_1\trp\\
& = V_2 V_2\trp.
\end{align*}
But since the columns of $V_2$ span the kernel $\K$ of $\MTM$, the matrix $V_2 V_2\trp$ is exactly the orthogonal projection of $\M$ onto  $\K$.
\end{proof}
The induced norm of any projection operator is always either $0$, if it's the zero operator, or
$1$ otherwise, which yields the following corollary.
\begin{corollary}
    The induced norm $\|\bbPN\|$ of the operator $\bbPN$ is bounded above by $1$, and is always equal to $1$ except when $\MTM$ is injective (full column rank), in which case $\bbPN = 0$ is the zero operator.
\end{corollary}
The corollary shows that when there are enough  training data so that $\MTM$ is of full column rank, then the norm of the data insufficiency operator $\bbPN$ is zero. Generically this happens when there are more data points (rows) than basis functions (columns) in $\MTM$,  as depicted in the left panel of Figure~\ref{fig:AliasingDecomposition}.

Let $\K(m)$ denote the null space of $\MTM(m)$. Since $\K(m) \subseteq \K(m+1)$, it follows that the error contribution $\| \bbPN \btheta_\M\|$ from data insufficiency is a nondecreasing function of $m$. This is depicted on the bottom right of the left panel of Figure~\ref{fig:AliasingDecomposition}.

\subsubsection{Model Insufficiency  \texorpdfstring{$\bbIU$}{IU}}\label{sec:model_insufficiency}

\emph{Model insufficiency} refers to the lower right block $\bbIU$ of Eq.~\eqref{eq:GAD}.
It is the most straightforward of the three parts of the GAD to analyze, as it is simply the identity operator on the unmodeled parameters $\U$.
Except in the trivial and uninteresting case that $\dim (\U) = 0$, its operator norm is always $1$.  The contribution to the parameter error from model insufficiency is
simply the square $\| \btheta_\U \|^2$ of the norm of the nescient parameters.  It follows that model insufficiency is a non-increasing function of $m$ since it decreases by exactly $\vert \theta_{m+1} \vert^2$ as the coordinate $\theta_{m+1}$ is removed from the unmodeled space $\U$ and adjoined to the modeled space $\M$.

Model insufficiency dominates when the dimension $m$ of the model $\M$ is small, reflecting the fact that most of the signal is unknown and the model lacks the capacity to capture the signal faithfully (see the bottom left part of the left panel of Fig.~\ref{fig:AliasingDecomposition}).

\subsubsection{Generalized Aliasing  \texorpdfstring{$\bbA$}{A}: Overview}\label{sec:GAD_overview}

Finally, we consider contributions from the \emph{generalized aliasing operator} $\bbA$.
This is the most complicated contribution, and is the source of non-trivial generalization curves such as double or multiple descent, or multiple risk peaks from structured data.  This term tends to dominate the risk in the intermediate regime of most models.  Importantly, in many cases the effects of $\bbA$ can be analyzed without knowing $\btheta$ or the labels $\y$. 

Recall from Eq.~\eqref{eq:bbA} that the aliasing operator is the product of the pseudoinverse design matrix $\MTM^+$ and the transformation $\MTU$.
Increasing the number of model parameters by moving one column $\bphi$ out of $\MTU$ and into the design matrix never increases the 
norm $\|\MTU\|$, but its effect on 
$\|\MTM^+\|$ is determined primarily by whether $\bphi$ 
is linearly independent of the other columns of $\MTM$ or not, as described in the following theorem (proved in Section~\ref{sec:math:A}).
\begin{theorem}\label{thm:main}
When changing the model by moving one column $\bphi$ out of $\MTU$ and into the design matrix,  the norm 
$\|\MTU\|$ never increases and 
\begin{itemize}
\item  $\|\MTM^+\|$ cannot decrease if $\bphi$ is linearly \textbf{independent} of the other columns of $\MTM$, 
\item $\|\MTM^+\|$ cannot increase if $\bphi$ is linearly \textbf{dependent} upon the other columns of $\MTM$.
\end{itemize}
Moreover, as the model dimension $m$ increases to $\infty$, the norm $\|\MTM^+\|$ shrinks to $0$, almost surely.
\end{theorem}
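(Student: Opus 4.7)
The proof rests on Cauchy-style interlacing of singular values under addition (or deletion) of a single column. The key fact I will invoke throughout is that if $B$ is obtained from $A$ by appending one column, then $\sigma_1(B)\ge\sigma_1(A)\ge\sigma_2(B)\ge\sigma_2(A)\ge\cdots$, so every singular value is weakly increased by adding a column and weakly decreased by deleting one. Given this, the claim that $\|\MTU\|$ never increases is immediate: the spectral norm equals $\sigma_1$, and removing $\bphi$ from $\MTU$ can only decrease $\sigma_1$. Equivalently, any admissible unit vector for the shortened matrix extends by a zero to an admissible unit vector for the original matrix with identical image.

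For $\|\MTM^+\|$, the starting observation is that if $\MTM$ has rank $r$, then $\|\MTM^+\|=1/\sigma_r(\MTM)$, the reciprocal of its smallest nonzero singular value. Let $\tMTM$ denote the design matrix after $\bphi$ is appended. When $\bphi$ is linearly independent of the existing columns, $\rank(\tMTM)=r+1$ and the controlling singular value is $\sigma_{r+1}(\tMTM)$; interlacing gives $\sigma_{r+1}(\tMTM)\le\sigma_r(\MTM)$, hence $\|\tMTM^+\|\ge\|\MTM^+\|$, so the norm cannot decrease. When $\bphi$ is linearly dependent, $\rank(\tMTM)=r$, the controlling singular value is $\sigma_r(\tMTM)$, and interlacing gives $\sigma_r(\tMTM)\ge\sigma_r(\MTM)$, hence $\|\tMTM^+\|\le\|\MTM^+\|$, so the norm cannot increase. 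Both bulleted monotonicity statements then follow mechanically.

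For the asymptotic statement, note first that once $m\ge n$ and the samples are in sufficiently general position (which holds almost surely under any reasonable random model for the training points), $\MTM$ attains full row rank $n$; its smallest nonzero singular value is then $\sigma_n$, and interlacing gives $\sigma_n(\MTM_{m+1})\ge\sigma_n(\MTM_m)$, so the sequence $\sigma_n(\MTM_m)$ is nondecreasing in $m$. Writing
\[
\sigma_n^2(\MTM_m)\;=\;\min_{\|\bm{u}\|=1}\sum_{j=1}^{m}(\bphi_j\trp\bm{u})^2,
\]
the task reduces to showing that this minimum diverges with $m$. Under completeness of the basis together with non-degeneracy of the sample points, $\sum_j(\bphi_j\trp\bm{u})^2=\infty$ almost surely for each fixed unit $\bm{u}$, and a compactness argument on the unit sphere (combined with the interlacing monotonicity in $m$) upgrades this pointwise divergence to uniform divergence, giving $\|\MTM_m^+\|=1/\sigma_n(\MTM_m)\to 0$.

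The main obstacle is precisely this last step. The three monotonicity claims fall out mechanically from interlacing, but the asymptotic vanishing of $\|\MTM^+\|$ requires both a completeness/density hypothesis on the basis and a probabilistic statement about the sampling; interlacing on its own delivers only monotonicity, not divergence. I would expect the detailed argument to invoke either a quantitative anti-concentration estimate on $\bphi_j\trp\bm{u}$ for the random training points, or a Bolzano--Weierstrass style argument on the sequence of unit-sphere minimizers followed by a monotone-convergence exchange of the sum and the minimum, and it is here that the ``almost surely'' qualifier genuinely enters.
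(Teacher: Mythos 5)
Your treatment of the nescience bound and the two monotonicity claims is essentially the paper's own argument: singular-value interlacing under appending or deleting a single column, which the paper phrases equivalently as eigenvalue interlacing of $\bbX\bbX\trp$ under the rank-one perturbation $\bphi\bphi\trp$ (their Theorems~\ref{thm:interlaced-eigenvalues} and~\ref{thm:add-column-inverse-norm}). That part is correct and takes the same route.

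The gap, as you yourself flag, is the asymptotic claim $\|\MTM^+\|\to 0$, and your sketch makes it harder than it needs to be. You correctly write
\[
\sigma_{min}^2\bigl(\MTM(m)\bigr)\;=\;\min_{\|\bm{u}\|=1}\ \sum_{j=1}^{m}\bigl(\bphi_j\trp\bm{u}\bigr)^2,
\]
but then propose to upgrade pointwise divergence in $\bm{u}$ to uniform divergence by a compactness or anti-concentration argument. The uniform statement you want is already a statement about a fixed $n\times n$ matrix: $\frac{1}{m}\sigma_{min}^2(\MTM(m)) = \lambda_{min}\bigl(\frac{1}{m}\sum_{j=1}^m\bphi_j\bphi_j\trp\bigr)$, the smallest eigenvalue of the sample second-moment matrix. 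Under the paper's actual hypothesis --- the columns $\bphi_j$ are i.i.d.\ random vectors in $\R^n$ with full-rank second moment $\Sigma = \ev[\bphi_j\bphi_j\trp]$ --- the strong law of large numbers gives entrywise (hence, in fixed dimension $n$, norm) convergence $\frac{1}{m}\sum_j\bphi_j\bphi_j\trp \to \Sigma$ almost surely, and continuity of eigenvalues gives $\lambda_{min}\to\lambda_{min}(\Sigma)>0$. Hence $\sigma_{min}^2(\MTM(m))\sim m\,\lambda_{min}(\Sigma)\to\infty$ and $\|\MTM(m)^+\|\to 0$ a.s. No Bolzano--Weierstrass step and no anti-concentration estimate are needed; the compactness you reach for is subsumed by the finite dimension of the training space. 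Also note a small mismatch in what is random: ``completeness of the basis together with non-degeneracy of the sample points'' is not the operative hypothesis, and what grows with $m$ is the set of columns (basis features, e.g.\ RFF/RRF), not the training points. The randomness enters through those columns, which is exactly where the ``almost surely'' lives.
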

Although it can be arranged so that $\|\MTM^+\|$  remains constant when moving one column $\bphi$ fom $\MTU$ to $\MTM$, in most cases we see a significant  increase in $\|\MTM^+\|$ whenever $\bphi$ is independent from the previous columns of $\MTM$ and a significant decrease in $\|\MTM^+\|$ whenever $\bphi$ is dependent upon the previous columns of $\MTM$.

Theorem~\ref{thm:main} fully explains the sharp peaks in generalization curves described as double and multiple descent, and it is relevant to other nonmonotonic features in both the under- and over-parameterized regimes, as we now describe.

For a generic $\bbM$ the columns are typically arranged so that for $m < n$ each column $\bphi_{m+1}$ is independent of the previous columns of $\MTM(m)$ and each column of $\MTU$ does not have a large impact on the norm of $\MTU$.
Hence, as $m$ increases the norm $\|\MTM^+(m)\|$ is expected to grow nearly monotonically until the interpolation threshold $m=n$.  Once $m\ge n$ the columns of $\MTM(m)$ are expected to span the column space of the entire training set $(\MTM | \MTU)$ of the operator $\bbM$, so each new column added to  $\MTM(m)$ will be linearly dependent on the existing columns, and hence the norms $\|\MTM^+(m)\|$ and $\|\bbA\|$ cannot increase and typically decrease.  
In this generic case, $\|\MTM^+(m)\|$ is a nondecreasing function of $m$ until $m=n$, after which it is nonincreasing.
The common peak in the generalization error at the interpolation threshold is thus understood as the peak in $\|\MTM^+(m)\|$ at $m = n$.

More complicated generalization curves can be understood by considering whether the next basis vector $\bphi_{m+1}$ is either linearly dependent (the norm $\|\MTM^+(m+1)\|\leq \|\MTM^+(m)\|$) or linearly independent (the norm $\|\MTM^+(m+1)\| \geq \|\MTM^+(m)\|$) on the previously modeled terms, i.e., all those columns already contained in $\MTM(m)$. Regardless of the ordering of the columns of $\bbM$, the upper bound
\[
\|\bbA\| \le \|\MTM^+\| \|\MTU\|
\]
cannot increase when stepping from $m$ to  $m+1$ unless the next column $\bphi_{m+1}$ is independent of the previous columns.  Moreover, this upper bound will almost surely shrink to $0$ as $m\to \infty$ as both $\|\MTU\| \rightarrow 0$ and $\|\MTM^+\|\rightarrow 0$.

Of course, one can arrange to add columns to $\MTM(m)$ in a way that the rank of $\MTM(m)$ grows slower than expected, permitting the construction of descent curves for $\|\bbA\|$ of various shapes.  But when the columns are sufficiently general (as, for example, with the random ReLU features (RRF) model and the random Fourier features (RFF) model), the result for $\|\bbA\|$ is similar in shape to the standard double descent curve for mean-squared error, described in \cite{BelkinOriginal} with a single large peak at the interpolation threshold and decreasing monotonically thereafter (see Figure~\ref{fig:belkin-MM-plot}). 
 
\subsubsection{Generalized Aliasing  \texorpdfstring{$\bbA$}{A}: Mathematical Treatment}\label{sec:math:A}

In this section we give more mathematical details of the norm $\|\bbA\|$ of the aliasing operator and a proof of Theorem~\ref{thm:main}.

\paragraph{Tools for Analyzing Norms}
The main tool we use is the following theorem, whose earliest statement seems to be \cite[Theorem 17]{Gantmacher} (see also \cite{BunchNielsen, Thompson, Wilkinson}).
\begin{theorem}\label{thm:interlaced-eigenvalues}
Let $\Phi$ be an $n\times n$ Hermitian matrix with eigenvalues $\alpha_1\ge \alpha_2 \ge \cdots \ge \alpha_n$ and let $C$ be a positive semidefinite matrix of rank $1$.  The eigenvalues  $\beta_1\ge \beta_2 \ge \cdots \ge \beta_n$ of the matrix $\Xi = \Phi + C$ satisfy 
\[
\beta_1 \ge \alpha_1 \ge \beta_2 \ge \alpha_2 \ge \cdots \ge \beta_n \ge \alpha_n.
\]
\end{theorem}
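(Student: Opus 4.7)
The plan is to deduce the interlacing inequalities from the Courant--Fischer min--max characterization of the eigenvalues of Hermitian matrices. Recall that for a Hermitian $A$ with ordered eigenvalues $\alpha_1 \ge \cdots \ge \alpha_n$ one has
\[
\alpha_k \;=\; \max_{\dim V = k}\; \min_{\substack{x \in V \\ \|x\|=1}} x^* A x \;=\; \min_{\dim V = n-k+1}\; \max_{\substack{x \in V \\ \|x\|=1}} x^* A x,
\]
and analogously for $B = A + C$. The two inequalities to establish are $\beta_k \ge \alpha_k$ (the $C \succeq 0$ side) and $\beta_{k+1} \le \alpha_k$ (the rank-$1$ side).

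For the first inequality I would use only that $C$ is positive semidefinite: for every $x$, $x^* B x = x^* A x + x^* C x \ge x^* A x$, and applying the max-min form of Courant--Fischer term-by-term yields $\beta_k \ge \alpha_k$ for each $k$. This is just Weyl's monotonicity inequality and is essentially immediate.

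The main content is the upper interlacing $\beta_{k+1} \le \alpha_k$, where I would exploit $\operatorname{rank}(C)=1$. Write $C = vv^*$ so that the kernel of $C$ is the hyperplane $v^{\perp}$ of dimension $n-1$. By the min-max form, there is a subspace $V_k \subseteq \C^n$ of dimension $n-k+1$ on which $\max_{\|x\|=1,\, x\in V_k} x^* A x = \alpha_k$. The intersection $W := V_k \cap v^{\perp}$ has dimension at least $n-k$ because $v^{\perp}$ has codimension $1$. On $W$ one has $Cx = v(v^* x) = 0$, hence $x^* B x = x^* A x \le \alpha_k$. Feeding a subspace of $W$ of dimension exactly $n-k$ into the min-max formula for $\beta_{k+1}$ (which takes a minimum over subspaces of dimension $n-(k+1)+1 = n-k$) gives $\beta_{k+1} \le \alpha_k$, completing the chain.

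The only subtle step is the dimension count establishing $\dim W \ge n-k$; this is the place where the rank-$1$ hypothesis is crucial, since a rank-$r$ positive semidefinite $C$ would only guarantee $\dim W \ge n-k-r+1$ and thus yield the weaker Cauchy interlacing $\beta_{k+r} \le \alpha_k$. Everything else is bookkeeping with Courant--Fischer, so I do not anticipate any real obstacle; one could alternatively prove the result by an explicit secular-equation argument showing that the eigenvalues of $A + vv^*$ are the roots of $1 + v^*(A - \lambda I)^{-1} v = 0$ interlaced with the eigenvalues of $A$, but the min-max route is cleaner and avoids handling repeated eigenvalues as a separate case.
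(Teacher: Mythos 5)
The paper does not prove this theorem itself: it cites it as a classical result, attributing it to Gantmacher (with further references to Bunch--Nielsen, Thompson, and Wilkinson), so there is no in-paper proof to compare against. Your Courant--Fischer argument is correct and is the standard textbook proof of this interlacing result. The lower inequalities $\beta_k \ge \alpha_k$ follow from Weyl monotonicity exactly as you say, and for the upper inequalities $\beta_{k+1} \le \alpha_k$ your dimension count is right: with $C = vv^*$, the subspace $W = V_k \cap v^{\perp}$ satisfies $\dim W \ge (n-k+1) + (n-1) - n = n-k$, which is precisely the dimension needed in the min-max characterization of $\beta_{k+1}$. You also correctly identify why rank one is essential (a rank-$r$ perturbation would only yield $\beta_{k+r} \le \alpha_k$, the general Weyl inequality). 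No gaps.
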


This theorem immediately gives the corollary that, under the same assumptions on $\Phi$ and $C$, the eigenvalues $\delta_1 \ge \cdots \ge \delta_n$ of $D = \Phi - C$ are below the corresponding eigenvalues of $\Phi$ and are interleaved according to:
\[
\alpha_1 \ge \delta_1 \ge \alpha_2 \ge \cdots  \ge \alpha_n \ge \delta_n.
\]

Theorem~\ref{thm:interlaced-eigenvalues} also leads to the following fundamental result for analyzing the operator norm of the aliasing operator $\bbA$, stated here in a more general form. 
\begin{theorem}\label{thm:add-column-inverse-norm}
Let $\bbX$ be an $n\times m$ matrix of rank $r$ with smallest  singluar value $\sigma_r>0$.  Let $\widetilde{\bbX} = [\bbX | \bphi]$ be the $n\times (m+1)$ matrix obtained by adjoining an $n$-dimensional column vector $\bphi$ to $\bbX$.  The smallest singular value $\widetilde{\sigma}_{\mathrm{min}}>0$ of $\widetilde{\bbX}$ satisfies the following relations:
\begin{align*}
0 & < \widetilde{\sigma}_{\mathrm{min}} \le \sigma_r  & 
\text{if $\rank(\bbX) < \rank(\widetilde{\bbX})$,}\\
0 & < {\sigma}_r \le \widetilde{\sigma}_{\mathrm{min}}  & \text{if $\rank(\bbX) = \rank(\widetilde{\bbX})$.}
\end{align*}
\end{theorem}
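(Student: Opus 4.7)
The plan is to pass from $\bbX$ to its $m \times m$ Gram matrix and reduce the statement to a direct application of Theorem~\ref{thm:interlaced-eigenvalues}. The key observation is the rank-one identity
\[
\widetilde{\bbX}\widetilde{\bbX}\trp \;=\; [\bbX \mid \bphi]\begin{pmatrix} \bbX\trp \\ \bphi\trp \end{pmatrix} \;=\; \bbX\bbX\trp + \bphi\bphi\trp,
\]
so that $B := \widetilde{\bbX}\widetilde{\bbX}\trp$ is obtained from $A := \bbX\bbX\trp$ by adding the rank-one positive-semidefinite matrix $C := \bphi\bphi\trp$. Recall the standard fact that the nonzero eigenvalues of $\bbX\bbX\trp$ are exactly the squares of the nonzero singular values of $\bbX$, and likewise for $\widetilde{\bbX}$.

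With this setup, let $\alpha_1 \ge \cdots \ge \alpha_m$ be the eigenvalues of $A$ and $\beta_1 \ge \cdots \ge \beta_m$ those of $B$. Since $\rank(\bbX) = r$, we have $\alpha_r = \sigma_r^2 > 0$ and $\alpha_{r+1} = \cdots = \alpha_m = 0$. Theorem~\ref{thm:interlaced-eigenvalues} (applied to $B = A + C$ with $C$ positive semidefinite of rank one) yields the interlacing
\[
\beta_1 \ge \alpha_1 \ge \beta_2 \ge \alpha_2 \ge \cdots \ge \beta_m \ge \alpha_m.
\]
From $\rank(\widetilde{\bbX}) \in \{r, r+1\}$ (it can change by at most one when appending a single column), the two cases of the theorem correspond to the two possible values of $\rank(\widetilde{\bbX})$.

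If $\rank(\widetilde{\bbX}) = r+1$, then $\widetilde{\sigma}_{\min}^2 = \beta_{r+1}$, and the interlacing gives $0 < \beta_{r+1} \le \alpha_r = \sigma_r^2$, so $0 < \widetilde{\sigma}_{\min} \le \sigma_r$. If instead $\rank(\widetilde{\bbX}) = r$, then $\widetilde{\sigma}_{\min}^2 = \beta_r$, and interlacing gives $\beta_r \ge \alpha_r = \sigma_r^2 > 0$, so $0 < \sigma_r \le \widetilde{\sigma}_{\min}$. Taking square roots finishes the proof.

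There is no serious obstacle here beyond bookkeeping; the only mild subtlety is the degenerate case $\bphi = \0$, where $C$ has rank zero and Theorem~\ref{thm:interlaced-eigenvalues} does not strictly apply, but then $B = A$, the rank is unchanged, and the conclusion $\widetilde{\sigma}_{\min} = \sigma_r$ is immediate. The infinite-dimensional version (which is what the generalized aliasing framework ultimately needs) would require more care, but the finite-dimensional statement asserted here follows cleanly from the interlacing theorem already established.
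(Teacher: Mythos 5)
Your proof is correct and follows essentially the same route as the paper: form the Gram matrix $\widetilde{\bbX}\widetilde{\bbX}\trp = \bbX\bbX\trp + \bphi\bphi\trp$, apply Theorem~\ref{thm:interlaced-eigenvalues} to get the interlacing of eigenvalues, and split on whether $\rank(\widetilde{\bbX})$ equals $r$ or $r+1$. The only addition beyond the paper's argument is your remark on the degenerate case $\bphi = \0$, which the paper leaves implicit.
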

\begin{proof}
Both $\bbX \bbX\trp$ and $\widetilde{\bbX}\widetilde{\bbX}\trp$ are $n \times n$ positive definite Hermitian matrices.
The singular value decomposition of $\bbX$ shows that the singular values $\sigma_1\ge \cdots \ge \sigma_r >0$ and the eigenvalues $\lambda_1\ge \cdots \ge \lambda_m$ of $\bbX \bbX\trp$ satisfy 
\[
\lambda_1 =\sigma^2_1 \ge \lambda_2 = \sigma^2_1 \ge \cdots \ge \lambda_r = \sigma^2_r > 0 = \lambda_{r+1}.  
\]
Similarly, the singular values $\widetilde{\sigma}_1 \ge \widetilde{\sigma}_2 \ge \cdots$ and eigenvalues $\widetilde{\lambda}_1 \ge \cdots \ge \widetilde{\lambda}_m$ of $\widetilde{\bbX} \widetilde{\bbX}\trp$ satisfy
\[
\widetilde{\lambda}_1 = \widetilde{\sigma}^2_1 \ge \widetilde{\lambda}_2 = \widetilde{\sigma}^2_2 \cdots \ge \widetilde{\lambda}_r = \widetilde{\sigma}^2_r \ge  \widetilde{\lambda}_{r+1} \ge \cdots,
\]
where $\widetilde{\lambda}_{r+1} = 0$ if $\rank(\widetilde{\bbX}) = r$, but $\widetilde{\lambda}_{r+1} > 0$  if $\rank(\widetilde{\bbX}) = r+1$.
Expanding $\widetilde{\bbX}\widetilde{\bbX}\trp$ gives
$\widetilde{\bbX}\widetilde{\bbX}\trp = \bbX \bbX\trp + \bphi\bphi\trp$,
where $\bphi \bphi\trp$ is positive semidefinite, so Theorem~\ref{thm:interlaced-eigenvalues} implies that 
\[
\widetilde{\lambda}_1 \ge \lambda_1 \cdots \ge \lambda_{r-1} \ge \widetilde{\lambda}_r \ge \lambda_r \ge \widetilde{\lambda}_{r+1} \ge 0. 
\]

If $\rank(\widetilde{\bbX}) = r+1$ (that is, $\bphi$ is not in the column space of $\bbX$), then 
$
\lambda_r = \sigma^2_r \ge \widetilde{\lambda}_{r+1} = \widetilde{\sigma}^2_{r+1} >0$.
Taking square roots gives  
$\sigma_r > \widetilde{\sigma}_{r+1} = \widetilde{\sigma}_{\mathrm{min}} > 0$.

If $\rank(\widetilde{\bbX}) = r$ (that is, $\bphi$ is in the column space of $\bbX$), then the smallest nonzero eigenvalue of  $\widetilde{\bbX}\widetilde{\bbX}\trp$ is $\widetilde{\lambda}_r$, which satisfies  
$\lambda_{r-1} \ge \widetilde{\lambda}_r \ge \lambda_r > 0$. 
Taking square roots gives 
$\widetilde{\sigma}_{\mathrm{min}}=\widetilde{\sigma}_{r}  \ge \sigma_r > 0$, as required.
\end{proof}

\paragraph{Decomposing Generalized Aliasing}
We are interested in how the (induced) operator norm 
\[
\|\bbA\| = \|\MTM^+ \MTU\| \le \|\MTM^+\| \|\MTU\|
\]
changes as the model grows, that is, as a new column is removed from $\MTU$ and added to $\MTM$, but the training set (which rows are included) remains unchanged.

As before, assume $\bbM$ is fixed, and $\MTM(m)$ corresponds to the design matrix block when the model consists of the first $m$ columns of $\bbM$, and $\MTU(m)$ is the corresponding unmodeled block.
The matrix $\MTM(m+1)$ is constructed by moving one column $\bphi_{m+1}$ from the nescience block into the design block.  The  training-set (top) part of the operator $\bbM$ decomposes as    
$\begin{bmatrix}\MTM(m) & \bphi_{m+1} & \MTU(m+1)
\end{bmatrix}$.

\paragraph{Norm of $\MTU$}
First consider what happens to the matrix $\MTU$ when a column $\bphi_{m+1}$ is removed from $\MTU(m) = [\bphi_{m} \quad \MTU(m+1)]$.
Expanding the product $\MTU(m) \MTU(m)\trp$ gives 
$\MTU(m)\MTU(m)\trp = \bphi_{m+1}\bphi_{m+1}\trp + \MTU(m+1)\MTU(m+1)\trp$.  
Since $\bphi_{m+1}\bphi_{m+1}\trp$ is positive semidefinite,  Theorem~\ref{thm:interlaced-eigenvalues} applies and 
guarantees that the 
norms satisfy $\|\MTU(m)\| \ge \|\MTU(m+1)\|$, and thus the norm $\|\MTU(m)\|$ is a nonincreasing function of $m$.  

\paragraph{Pseudoinverse of Design:}
Consider now the pseudoinverse term $\|\MTM^+\|$ when $\bphi_{m+1}$ is adjoined to $\MTM(m)$ to create  $\MTM(m+1)$. Theorem~\ref{thm:add-column-inverse-norm} guarantees that whenever $\bphi_{m+1}$ is linearly independent of the old model (does not lie in the column space of $\MTM(m)$), then the induced norm of the new pseudoinverse is bounded below by the induced norm of the old pseudoinverse:  
\[
\|\MTM(m+1)^+\| \ge \|\MTM(m)^+\|.
\]
Similarly, when $\bphi_{m+1}$ is linearly dependent on the old model, then the induced norm of the new pseudoinverse is bounded above by the norm of the previous pseudoinverse
\[
\|\MTM(m+1)^+\| \le \|\MTM(m)^+\|.
\]
This proves Theorem~\ref{thm:main}.

\paragraph{Limiting behavior of \texorpdfstring{$\bbA$}{A}}\label{methods:limit-of-A}
As the number $m$ of model parameters gets large, the norm $\|\bbA\|$ is dominated by the norm of the pseudoinverse $\|\MTM^+\|$.   For purposes of this analysis, assume that the columns of the training-set (top) part of $\bbM$ are independent identically distributed (i.i.d.) random vectors $\bphi_i\in \R^n$ with finite second moment $\ev[\bphi_i \bphi_i\trp] = \Sigma$, where $\Sigma$ is of full rank (rank $t$).   

The Strong Law of Large Numbers guarantees that 
\begin{align*}
\frac{1}{m} &\MTM(m) \MTM(m)\trp \\
&= \frac{1}{m} \sum_{i=1}^m \bphi_i\bphi_i\trp \stackrel{a.s.}{\longrightarrow} \ev 
[\bphi_i\bphi_i\trp]\\
& = \Sigma
\end{align*}
as $m\to \infty$.  This implies that the smallest eigenvalue of 
$\frac{1}{m} \MTM(m)\MTM(m)\trp$ converges almost surely to the smallest eigenvalue $\lambda_{\mathrm{min}}>0$ of $\Sigma$.  Thus the smallest eigenvalue of $\MTM(m) \MTM(m)\trp$ approaches $m \lambda_{\mathrm{min}}$ and goes to infinity almost surely as $m\to \infty$.
Thus the smallest singular value $\sigma_{\mathrm{min}}(m) = \sqrt{\lambda_{\mathrm{min}}(m)}$ of $\MTM(m)$ also goes to infinity, and this implies
$
\|\MTM(m)^+\| = \frac{1}{\sigma_{\mathrm{min}}} \stackrel{a.s.}{\longrightarrow} 0$.

Because $\|\MTU(m)\|$ is bounded above and decreasing in $m$, we have 
\begin{align*}
\|\bbA(m) \| &=  \|\MTM(m)^+ \MTU(m)\|\\
& \le \|\MTM(m)^+\| \|\MTU(m)\| \stackrel{a.s.}{\longrightarrow} 0.
\end{align*}
In the special case that the columns $\bphi_i$ are i.i.d.~standard normal and $m>n$, it is known \cite[Thm 2.6]{Vershynin} that $\ev[\sigma_{\mathrm{min}}(m)] \ge \sqrt{m} - \sqrt{n}$, so $\|\MTM(m)^+\|$ and $\|\bbA(m)\|$ are $O(m^{-1/2})$ or smaller.

\subsubsection{Model--Data Trade-off}
\label{sec:overmodeling}

Because the data and model insufficiency terms are non-decreasing and non-increasing respectively, there is an inherent trade-off between them.
To study this trade-off, we introduce the combined model and data insufficiency error: 
\begin{align}
  \
  \| \bbE_I \btheta \|^2 & = \| {\color{Green}{\bbP_{\K(m)}}} \btheta_{\M(m)} \|^2 + \| {\color{red}{\bbI_{\U(m)}}} \btheta_{\U(m)} \|^2.
\end{align}
where we have made the $m$-dependence explicit.
To make statements about the dependence of the combined insufficiency errors, we consider two prior distributions for the distribution of the components of $\btheta$.

We first consider  the \emph{random feature model} in which components of $\btheta$ are i.i.d.~random variables with mean zero and variance $\sigma^2$. (Note that the random feature model is sensible only when the parameter space has finite dimension, otherwise the norm $\|\btheta\|$ would be infinite.)
In this setting the expected total insufficiency error is
\begin{align}
\ev_\theta\left[ \|  \bbE_I  \btheta \|^2 \right] & =  \sigma^2 \left( \Tr \bbPN + \Tr \bbIU \right) \\
& = \sigma^2 ( \dim \K  + \dim \U ).
\end{align}
At each step $\dim(\U)$ decreases by one, while $\dim(\K)$ increases by either zero or one; so, for the random feature model, the expected total insufficiency error is a strictly nonincreasing function of $m$.

In many ways the random feature model is unrealistic for scientific and engineering applications.  
Modelers often have prior information about which parameters are most important and preferentially order the parameter vector to reflect this.
In such cases and for very small $m$, as $m$ increases there is often an initial descent of model insufficiency due to the model's rapidly increasing ability to capture the signal faithfully.
This is conceptually analogous to reducing bias in the classical bias--variance paradigm. 
However, for very large models, the data insufficiency grows faster than the decrease in model insufficiency.
This growing error for large models is not analogous to variance and cannot be termed over-fitting.
Rather, it reflects the lack of invertibility for large models, specifically, larger parameter bias as more of the mass of $\btheta$ is projected into the kernel $\K$ of the design matrix $\MTM$.

This phenomenon of growing data insufficiency could be thought of as a form of  \emph{over-modeling}.
It occurs when parameters that are expected to contribute minimally to the signal are included in the model.
To be accurately inferred, such parameters place stringent informativity requirements for the data, amplifying the effects of data insufficiency.
This growing data insufficiency is less of a problem in random feature models, because all parameters contribute  essentially equally, which is why, as shown in the discussion above about random feature models and in the examples in Section~\ref{sec:realworld} below illustrate that random feature models tend to have optimal performance in the asymptotic limit as $m\to \infty$.  
But models that exploit prior information, so that the ordering of the basis functions and/or the choice of the training points are physically motivated, are more likely to suffer from increasing data insufficiency as the number of parameters grows.  Thus these models generally  have their ideal risk occur in the classical regime.

\section{Demonstrations and Applications}
\label{sec:applications}

\subsection{Random Feature Models}\label{sec:RandomExamples}

Although the motivating example in Section~\ref{sec:aliasing} for the generalized aliasing decomposition (GAD) was focused on one-dimensional polynomials, the GAD applies much more generally to the problem of fitting a function $f:\Omega  \to \R$ or $f:\Omega \to \C$ for a general set $\Omega$.  We illustrate this with examples of two different choices of models applied to three different data sets.  
The two bases are random Fourier features  (RFF) and random ReLU features (RRF) (described in \cite{BelkinOriginal} and \cite{MeiMontanari}).  

\begin{figure*}
    \begin{center}
    \includegraphics[width=\textwidth]{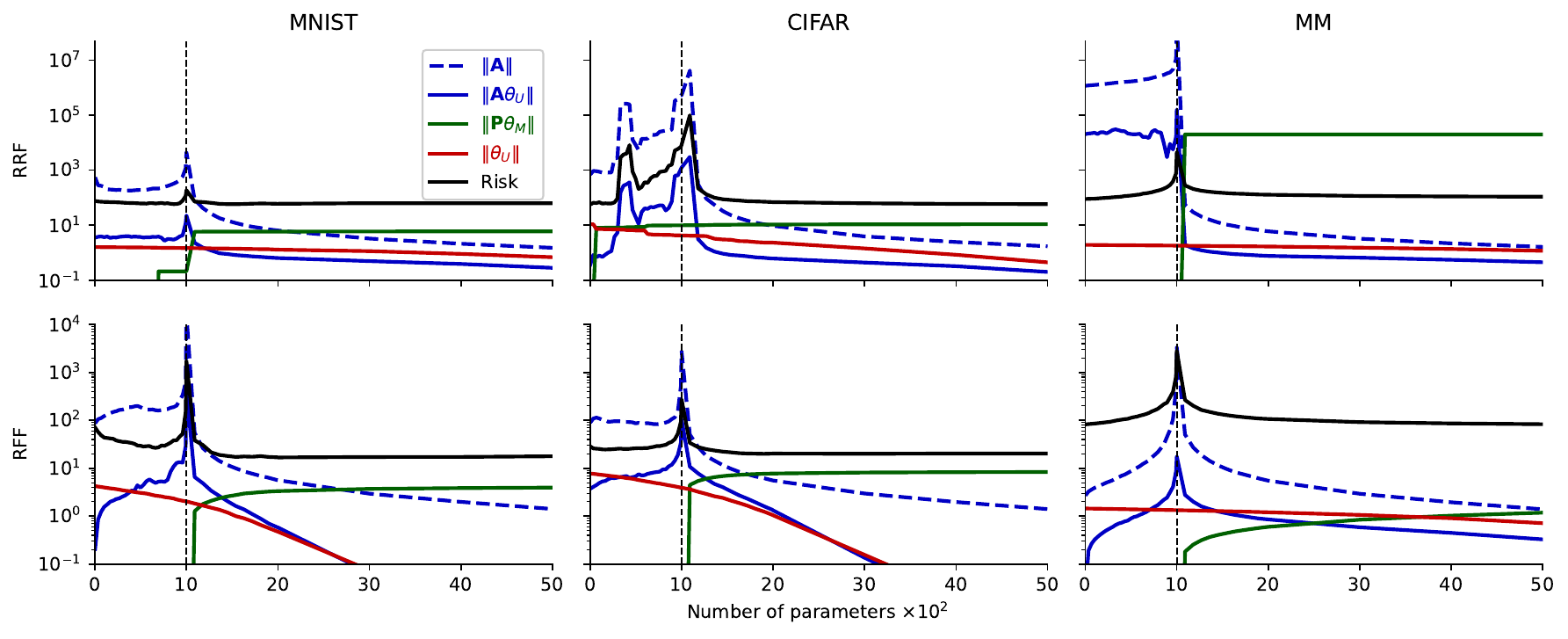}
    \end{center}
\caption{The induced (spectral) norm of the aliasing operator ${\bbA}$ (dashed blue), the aliased parameter error $\|\bbA \btheta_\U\|$ (solid blue), the data insufficiency parameter error $\|\bbPN \btheta_\M\|$, the model insufficiency parameter error $\|\bbIU\btheta_\U\|$, and the risk (black) for the random ReLU features (RRF) model (top row) and the 
random Fourier features (RFF) model (bottom row) on the MNIST and CIFAR-10 datasets, as in \cite{BelkinOriginal}, as well as on the Mei-Montanari (MM) sphere \cite{MeiMontanari}. In each case the models were trained on 1,000 randomly chosen training points (vertical dashed black line) with the number of modeled parameters ranging from 1 up to 5,000. Although the aliasing operator $\|{\bbA}\|$ and aliasing parameter error $\|\bbA \btheta_{\U}\|$ both go to zero almost surely as the number of parameters goes to $\infty$, the full parameter error has contributions from $\bbIU\btheta_\U$ (also decreasing, but slowly) and the data insufficiency parameter error $\|\bbPN\btheta_\M\|$, which, though bounded above by $\|\btheta\|$, is nondecreasing.
Data insufficiency $\|\bbPN\btheta_\U\|$ is generally $0$ until the interpolation threshold, but in the top center panel (and to a lesser degree in the top left panel) it is nonzero before the interpolation threshold, indicating that the design matrix $\MTM$ fails to be full rank fairly early.  Presumably this happens because ReLU is vanishes for many inputs.  The early large decrease in  $\|\bbA\|$ in that top center panel could be partly due to adding linearly dependent columns to $\MTM$ or to the (usually much less significant) decrease in $\|\MTU\|$ as columns are moved from $\MTU$ into $\MTM$.}
\label{fig:belkin-MM-plot}
\end{figure*}

All of these basis functions are of the form $\phi_k(\bt) = \sigma(\langle \bt, \bv_k \rangle)$, where the $\bv_k\in \R^d$ are i.i.d.~normal, 
and $\sigma$ is some activation function. 
In the case of the RRF model, the activation function is the usual ReLU, and in the case of RFF the activation function is $\sigma(x) = \exp(i \pi x)$.  The models that result from using these two choices (either RRF or RFF) can both be thought of as 2-layer neural networks of the form 
\[
y(\bt) = \sum_{k=1}^m \theta_k \phi_k(\bt) = \sum_{k=1}^m \theta_k \sigma(\langle \bt, \bv_k \rangle).
\]

The data sets we use here are images from MNIST and CIFAR-10 and points from the sphere $\mathbb{S}^{d-1}(\sqrt{d})$, as in Mei--Montanari \cite{MeiMontanari}; we have arbitrarily fixed $d=1024$ for this sphere.  In each case 1,000 training points $\bt_i$ were drawn uniformly and evaluated at 6,000 basis functions (either RRF or RFF).  
The columns of the resulting design matrix $\MTM$ and unmodeled block $\MTU$ are all of the form 
$\bphi_k = (\phi_k(\bt_1), \dots, \phi_k(\bt_n))$. 

In Figure~\ref{fig:belkin-MM-plot} we plot the norm of the aliasing matrix $\bbA$ and the parameter error contribution from each of: data insufficiency, model insufficiency, and generalized aliasing.  
The risk is also plotted  and each of these terms are displayed as functions of the number $m$ of parameters for these models on the three different datasets.
Recall that operator norms $\|\bbPN\|$ and $\|\bbIU\|$ are always $1$ or $0$, so we instead plot the norms of the products $\|\bbPN \btheta_{\M}\|$ and $\|\bbIU \btheta_{\U}\|$, which are the contributions to parameter error due to data insufficiency and model insufficiency, respectively.  We also plot $\|\bbA \btheta_{\M}\|$, to show the effect of each part of the GAD on the parameter error.  

The GAD decomposition in these examples closely matches the canonical picture presented in Figure~\ref{fig:AliasingDecomposition}, illustrating the dominant effect that the aliasing operator has on the non-monotonic behavior of the full risk.  This generic behavior is because the random selection of additional features (columns in $\MTM$) almost surely guarantees that such new features are linearly independent (on the sample points) from the existing features up to the interpolation threshold so that the risk will increase with model complexity.  After the interpolation threshold, additionally added features will be linearly dependent on the existing modeled features, and the aliasing (and hence risk) will decrease with model complexity.

\subsection{Why call it `aliasing'? Discrete Fourier series}\label{sec:Fourier}

To clarify the name ``generalized aliasing,'' we turn to an example familiar in the signals-processing community, the Fourier decomposition, which we describe here.

For a square-integrable function on the interval $[0,T]$ we will assume that our training data comes from equally spaced points $0 = t_0 < \cdots < t_n = T$.
We let  $\omega_n = \exp(2\pi \imath / n),$
be a primitive $n$-th root of unity and introduce the Fourier basis vectors $\w_n^{(k)} =(\omega_n^0,\omega_n^k,\ldots,\omega_n^{(n-1)k})$.  The discrete Fourier transform is the vector of coefficients $\hat{\f} = (\hat{f}_0,\hat{f}_1,\ldots, \hat{f}_{n-1})$ such that
\begin{equation}
    \f = \sum_{k=0}^{n-1} \hat{f}_k \w_n^{(k)},
\end{equation}
where the vector $\f$ is the vector of the sampled values of the function $f(t)$ sampled at the specified sample points.  Orthonormality of the Fourier basis in the standard $\ell^2$ inner-product space allows us to identify the Fourier coefficients
\begin{equation}
    \hat{f}_k = \left\langle \w_n^{(k)},\f\right\rangle = \frac{1}{n} \sum_{\ell=0}^{n-1}\omega_n^{-k\ell} f_\ell.
\end{equation}

\begin{figure*}
\begin{center}
    \includegraphics[width=0.49 \textwidth]{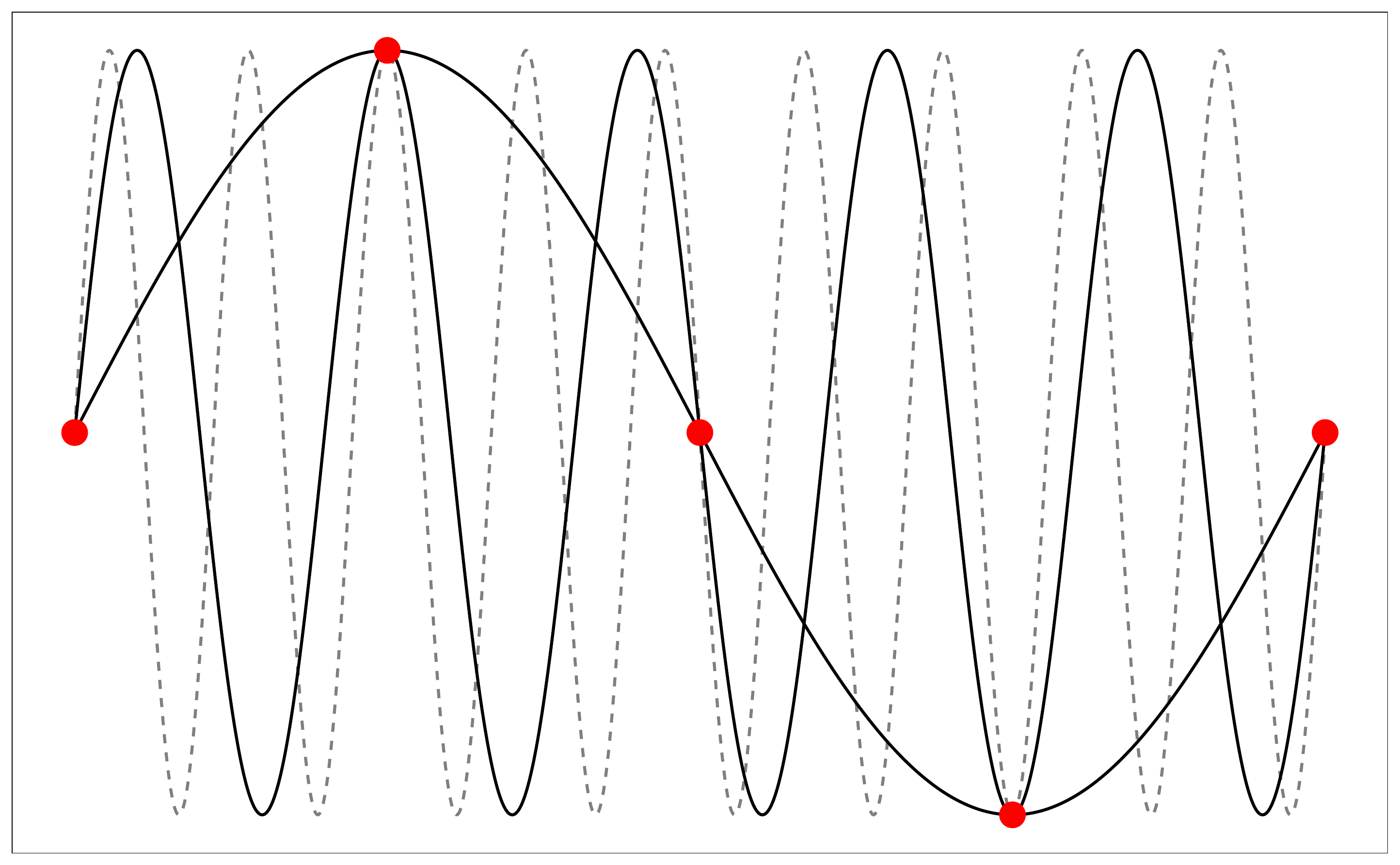}\includegraphics[width=0.49\textwidth]{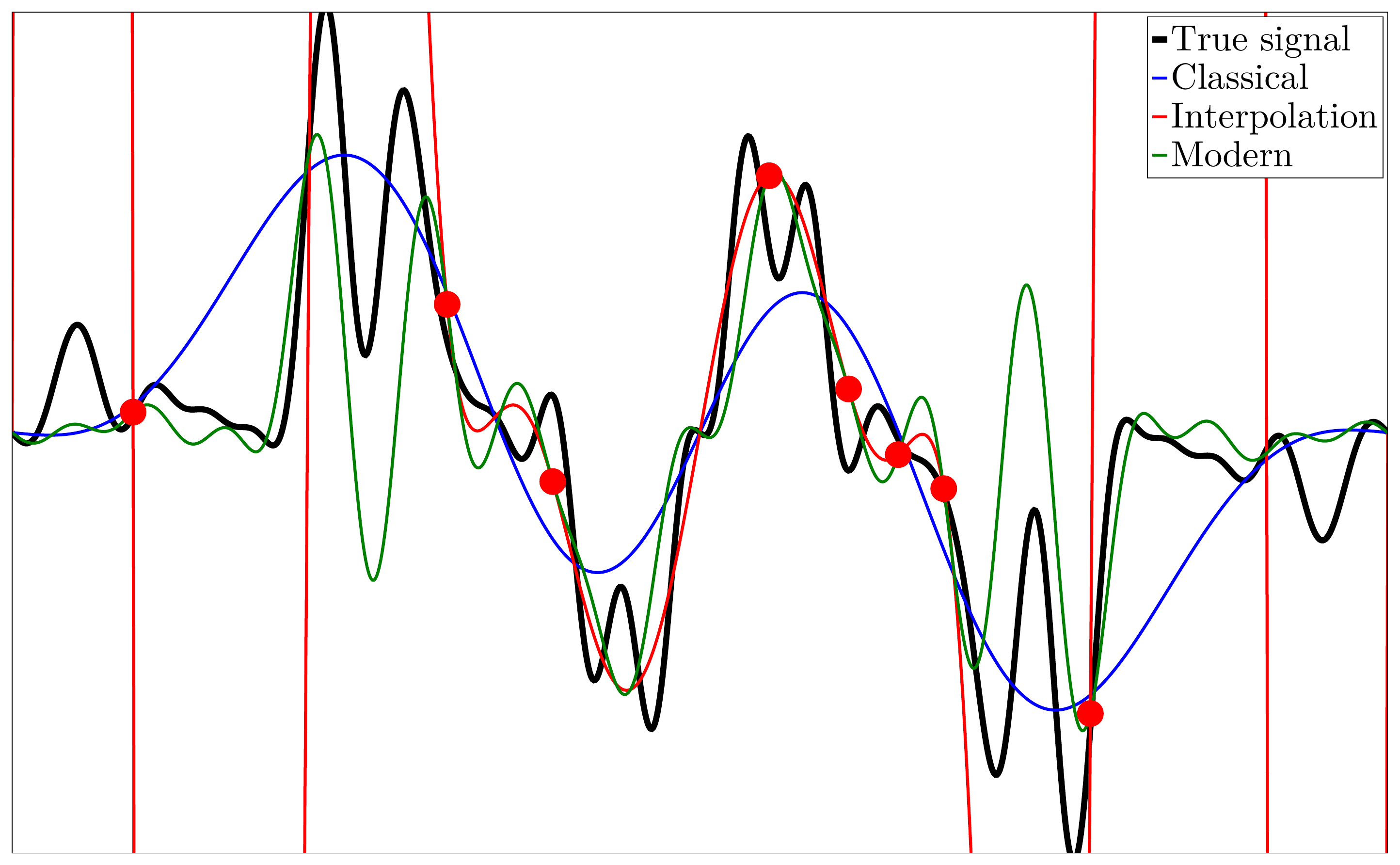}
\end{center}    
    \caption{Aliasing occurs when basis functions that are independent over the entire domain are linearly dependent at the sampled points (left).  
      When fitting a noisy signal (right), the classical sweet spot includes the dominant modes in the signal (blue).
      Over-fitting occurs when the combined contribution from the unmodeled modes is aliased into the model parameters, producing wild swings in the model predictions (red).
      Including additional terms allows the learning algorithm to distribute that signal over several basis terms.
      The result is a model whose predictions oscillate rapidly on a scale that is statistically similar to the true signal (green).}
    \label{fig:FourierAliasing}
\end{figure*}

In this example (to illustrate the signals-processing version of aliasing) we select the same number of basis functions $n$ as training points, that is  $m=n$.  The testing points are all other points in the interval. 
The design matrix is a variant of the Vandermonde matrix 
\begin{equation}
    \MTM = \frac{1}{n}\begin{pmatrix}
        1 & 1 & 1 & \cdots & 1\\
        1 & \omega_n^{-1} & \omega_n^{-2} & \cdots & \omega_n^{-(n-1)}\\
        1 & \omega_n^{-2} & \omega_n^{-4} & \cdots & \omega_n^{-2(n-1)}\\
        \vdots & \vdots & \vdots & \ddots & \vdots\\
        1 & \omega_n^{-(n-1)} & \omega_n^{-2(n-1)} & \cdots & \omega_n^{-(n-1)^2}
    \end{pmatrix},
\end{equation}
and the unmodeled block  $\MTU$ is bi-infinite with $n$ rows and columns. 
Since  $\omega_n^{\ell n} = 1$ for any integer $\ell$, the unmodeled block $\MTU$ is equal to an infinite number of copies of the design matrix
\[
\MTU = \begin{pmatrix}\cdots & \MTM & \MTM & \cdots \end{pmatrix}.
\]
Because we have selected $m=n$, the design matrix is full rank, and $\MTM^+ = \MTM^{-1}$.  
Thus $\bbA = \MTM^{-1} \MTU$ and $\bbB = \bbI_n$. 
This gives 
\begin{align}
    \bbA 
    &= \MTM^{-1} \begin{pmatrix} \cdots & \MTM & \MTM & \cdots \end{pmatrix} \\   
    \begin{pmatrix}
      \cdots &   \bbI_n & \bbI_n & \bbI_n & \cdots
    \end{pmatrix};
\end{align}
that is, $\bbA$ is a bi-infinite matrix  with $n$ rows and infinitely many columns in both directions, and it corresponds to infinitely many copies of the $n\times n$ identity matrix $\bbI_n$.

This derivation aligns exactly with the traditional concept of aliasing in the signals-processing literature \cite{roberts1987digital}, where the first column of the $\ell$-th copy of $\bbI_n$ in $\bbA$ corresponds to the $\ell n$th mode of the system, which is exactly aliased to the $0$-th mode; the second column of each copy of $I_n$ corresponds to the $(\ell n+1)$-th mode which is exactly aliased to the first mode of the actual signal, and so forth.  Unless the signal is band-limited, an infinite number of modes are aliased to each of the modeled modes.  Traditionally, the aliasing effect is not significant because signals are assumed to have most of their strength in the lower frequencies, that is the magnitude of the higher modes $\theta_k$ is assumed to decay to $0$ rapidly as $k\rightarrow \pm\infty$, which means that although $\bbA$ is bi-infinite, its effect is minimal on the actual representation of the signal.

This mathematical derivation is represented visually in the left panel of Fig.~\ref{fig:FourierAliasing}: although basis functions are independent over the entire prediction domain, they may make identical predictions over the sampled subset (red dots).
If the true signal contains contributions from all basis functions, but only a subset is explicitly modeled, the contribution from the unmodeled modes is aliased into the truncated representation.

The right panel shows three fits for an artificial data set using the Fourier basis.
The true signal (black) includes contributions from all Fourier modes (although the low-frequency modes dominate).
The classical sweet spot (blue) only models the dominant modes and produces a reasonable interpolation.
At the interpolation threshold (red), however, the aliasing operator magnifies the unmodeled modes, producing large swings in the model predictions between the training samples.
Beyond the interpolation threshold (green), the additional, high-frequency basis elements temper the aliasing effects by redistributing the signal among multiple basis functions.
The result is a rapidly oscillating signal that does not exhibit the wild swings of overfitting.
Although the oscillations in this inferred signal do not match those of the true signal, they are statistically similar, leading to reasonable model predictions.

\subsection{Differential Equations}
\label{sec:collocation}

Despite their superficial dissimilarity, it has been recognized for decades that solution methods for differential equations are formally equivalent to data fitting problems, as we see here.
A linear ordinary differential equation can be written as $\mathcal{L}[u](x) = f(x)$ where $\mathcal{L}$ is a linear differential operator, $u(x)$ is the unknown function, and $f(x)$ is a given function, often referred to as the ``data.''
As an example in this section we use the simple case where $\mathcal{L}[u] = u''(x)$, which describes the transverse displacement of a string under tension with transverse loading force given by $f(x)$.  The fundamental concepts, however,  are much more general than this simple example.

To solve such equations numerically, many approximation schemes exist in which $u(x)$ is approximated in some finite-dimensional subspace, such as with finite-differences, Galerkin truncation, or a collocation method.
In nearly all cases, the schemes lead to computational problems formally equivalent to the regression problem described in section~\ref{sec:aliasing}, which we now demonstrate explicitly for collocation.
Expand $u(x) = \sum_{\ell} \theta_{\ell}\psi_{\ell} (x)$ in some basis $\{\psi_{\ell}\}$; the differential equation then becomes $\sum_{\ell} \theta_{\ell} \mathcal{L}[\psi_{\ell}](x) = f(x)$.
In the collocation approach, we enforce that this equation is satisfied exactly at several sample points (training points) $x_i$, so that $\mathcal{L}[u](x_i) = f(x_i)$ for each training point.
If we denote $\phi_{\ell}(x) = \mathcal{L}[\psi_{\ell}](x)$, these conditions take the form:
\begin{align}
  \label{eq:collocation}
  \sum_{\ell} \theta_{\ell} \phi_{\ell}(x_i) & = f(x_i), \quad i = 1, \dots, n.
\end{align}
The collocation problem is then to choose basis functions $\psi_{\ell}(x)$ (and by extension $\phi_{\ell}(x)$) and collocation points $x_i$ such that solving Eq.~\eqref{eq:collocation} leads to as small error as possible throughout the entire domain.
This problem formulation is equivalent to a regression problem and the generalized aliasing decomposition gives insights into the structure of the errors.

To make these ideas concrete, consider the specific case of
\begin{align}
  \label{eq:ODEexample}
  u''(x) & = x \\
  u(0) & = u(\pi) = 0,\notag
\end{align}
which has the solution $u(x) = (x^3 - \pi^2x)/6$.

We first solve this problem using a sine basis $\psi_{\ell}(x) = \sin({\ell}x)$ with 32 uniformly spaced collocation points and 32 validation points uniformly spaced between them.
The resulting GAD for this problem is shown in the upper left of Figure~\ref{fig:ODEexample}.
Because the Fourier basis is orthonormal with respect to the uniformly spaced points, the aliasing operator has unit norm.
This scenario is equivalent to the traditional understanding of aliasing as presented in section~\ref{sec:Fourier}, and indeed, aliasing has functionally no effect on the risk curve.  The elimination of $\bbA$ as a factor in the risk arises because this selected basis is composed of exactly the orthonormal eigenfunction basis of the Sturm-Liouville problem defined by Eq.~\eqref{eq:ODEexample}.

For this specific, well-adapted basis the risk curve has a weak ``U'' shape that reflects the trade-off between the data and model insufficiency.
The minimum occurs precisely at the interpolation threshold where these two contributions to the error are balanced, demonstrating why the Fourier transform (and the sine series employed here) are most optimal at the interpolation threshold.
The striking absence of any aliasing effects is because the basis is optimally adapted to the sampling points.
However, this result is sensitive to many aspects of the problem formulation, as we now explore through the lens of the generalized aliasing decomposition.

\begin{figure*}
    \begin{center}
    \includegraphics[width=\textwidth]{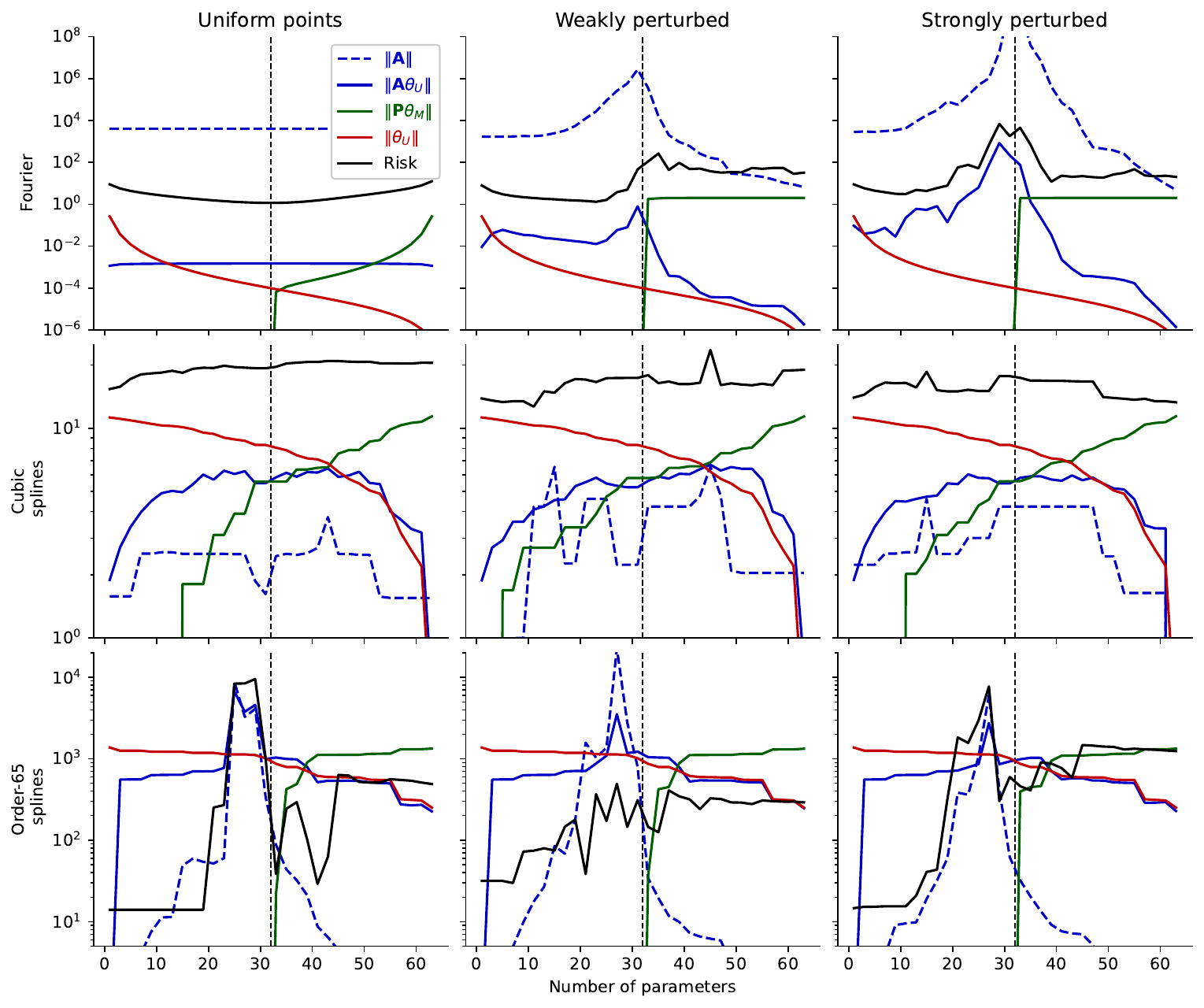}
    \end{center}
    \caption{Generalized aliasing decomposition for numerical solutions to the differential equation~\ref{eq:ODEexample} using different bases and sampling schemes. In each case there are 32  collocation (training) points and 32 validation points.  The dashed vertical black line marks the interpolation threshold.  In the first column, the sample points (collocation and validation)are uniformly spaced.     
    Moving to the right, the sample points are increasingly perturbed by a random amount. 
    The first row is a Fourier-sine basis.  Because these basis functions are orthonormal at the uniform points, the norm $\|\bbA\|$ of the aliasing operator is essentially constant in the upper left panel, but as the sample points are perturbed, the basis functions move away from being orthonormal and aliasing increases near the interpolation threshold. 
    The second row corresponds to a basis of cubic splines. These have fairly narrow, compact support, so the basis functions are close to being orthogonal on all the sample points, keeping the aliasing small across the row, but the small support means model insufficiency $\|\bbIU\btheta_\U\|$ drops off much more slowly than in the Fourier case, and it also causes the data insufficiency $\|\bbPN \btheta\|$ to become significant long before the interpolation threshold.  
    The bottom row corresponds to higher-order splines.  These have support across the full domain, which makes for nontrivial aliasing for all the different choices of sample points, but the data insufficiency is very small until the interpolation threshold. These high-order splines are also highly localized, which means the model insufficiency $\|\bbIU \btheta_\U\|$ drops off more slowly than in the other rows. See Section~\ref{sec:collocation} for more details about this example.}
    \label{fig:ODEexample}
\end{figure*}

Moving horizontally across the top row in Figure~\ref{fig:ODEexample} we explore the sensitivity of the solution to the choice of collocation and validation points.
If the sampling points (training and validation) are weakly perturbed from uniform spacing (top row, middle), aliasing  emerges around the interpolation threshold along with the characteristic double descent peak. 
Because the exact solution is continuous, the Fourier series converges rapidly; corresponding to a rapidly decreasing model insufficiency $\|\bbIU\btheta\|$.
But the aliasing contribution $\Vert \bbA \theta_\U\Vert$ to the parameter error (and hence to risk) is large.
Consequently, the optimal solution is no longer at the interpolation threshold; it occurs at the classical sweet spot.
In this case, the minimum is to the left of the interpolation threshold rather than to the right, because the basis is ordered with dominant terms first.
Consistent with our analysis in section~\ref{sec:overmodeling}, the asymptotic limit is suboptimal due to data insufficiency.

The second row of Figure~\ref{fig:ODEexample} shows the results for a cubic b-spline basis with 65 uniformly spaced knots throughout the domain.
We omit the two basis functions that do not satisfy the boundary conditions for a basis of 64 functions.  
Because there is no natural ordering, we randomly shuffle the basis functions, making the model equivalent to a random feature model.
Moving horizontally across the second row of the figure shows the GAD for the same choice of sampling points as for the Fourier basis.

In all three cases, notice that the contributions from aliasing are absent.
This is because the basis functions have relatively small compact support, which strongly limits their ability to alias with each other.
We observe similar results for other choices of bases with relatively small compact support, such as Haar wavelets.
But, unlike the other rows, in this basis data insufficiency $\|\bbPN \btheta_\M\|$ becomes nonzero long before the interpolation threshold.
This is also at least partly due to the relatively small compact support---the basis functions are each zero throughout most of the domain, so they have a nontrivial nullspace for our choice of sampling points.
This  phenomenon is similar to that observed for the random ReLU basis on CIFAR-10 in the upper middle panel of Figure~\ref{fig:belkin-MM-plot}.

Finally, on the bottom row we apply another b-spline basis of 65th-order polynomials.
As before, we remove the two basis functions that do not satisfy the boundary conditions and shuffle the remaining basis functions.
While these splines are technically continuous and non-zero throughout the entire domain, each basis function is strongly peaked around a small portion of the domain.
In this case, we see some contributions from aliasing before the interpolation threshold, but it is not as prominent as with the Fourier basis and is essentially independent of the choice of sampling points.  

In general, a similar analysis can be applied to other differential operators, including partial differential equations.
Indeed, the effects of aliasing (and the need for dealiasing) are well known in the simulation of nonlinear partial differential equations (see \cite{orszag1971elimination} for the original reference or \cite{boyd2001chebyshev} for a more thorough discussion).
The current decomposition applies there as well, and some results are also known for nonlinear equations, as we now summarize.

The generic setup for a quadratic nonlinearity would be of the form
\begin{equation*}
    \frac{d\y}{dt} \approx \y\odot \y,
\end{equation*}
where $\odot$ is the Hadamard (entrywise) product.
In this setting, the labels $\y$ denote a spatially and temporally dependent function described by the basis functions in the design matrix $\bbM$, i.e. $\y = \bbM\btheta$.  To solve this system, we note that the differential equation can be written as:
\begin{equation*}
    \bbM \dot{\btheta} \approx(\bbM\btheta) \odot (\bbM\btheta),
\end{equation*}
where the $\dot{}$ refers to the time derivative.  If the entire basis $\bbM$ could be used, then the solution is obtained by multiplying on the left by the appropriate pseudoinverse $\bbM^+$.  In reality, all of $\bbM$ is not available, and so we decompose the system just as before, leading to a term on the right-hand side that resembles the aliasing operator, but now with a quadratic dependence on the unmodeled terms $\MTU$ which leads to a famous ``$3/2$ rule'' for pseudospectral methods \cite{orszag1971elimination}.

\subsection{Material Discovery: Cluster Expansion}
\label{sec:realworld}

\begin{figure*}
\begin{center}
\includegraphics[width=\textwidth]{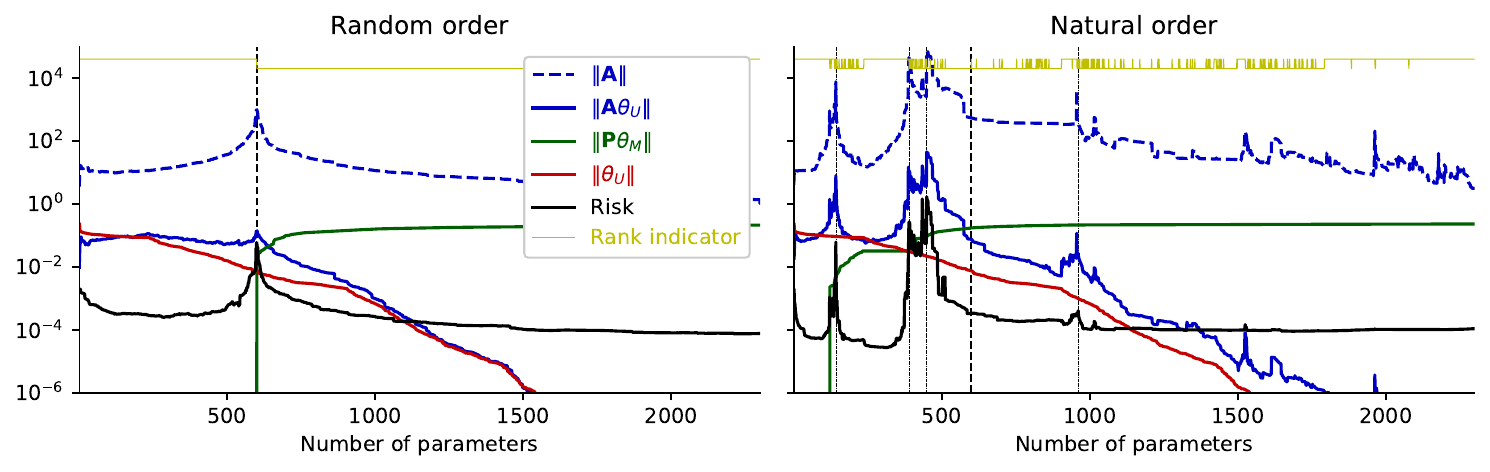}
\end{center}
    \caption{Norm of the operators and true risk (black) of the cluster expansion model of Section~\ref{sec:realworld}  as model complexity is increased (i.e., as parameters are added) with 600 training points. The interpolation threshold is indicated by a vertical dashed line. The yellow at the top of each panel is an indicator function that is high when the added basis function is linearly independent of the previous columns (restricted to training points) and low when it is linearly dependent.
    In the left panel rows and columns of $\mathbf{M}$ have been randomly ordered. In the right panel the rows and columns of the design matrix are given a ``natural'' ordering, resulting in multiple peaks (indicated by dash-dot vertical lines) and valleys for the risk and aliasing.  Note also how the data insufficiency $\|\bbPN\btheta_\M\|$ becomes significant only at the interpolation threshold for the random ordering, but it becomes significant long before the interpolation threshold in the natural ordering.}
    \label{fig:CEexample}
\end{figure*}

As a final example we consider the \emph{cluster expansion}, an extremely efficient model for prediction of novel materials phases. For a gentle but thorough introduction to the formalism, see~\cite{lerch2009uncle}. In brief, the cluster expansion model is a generalized Ising model \cite{sanchez1984generalized,sanchez1993cluster,sanchez2010cluster,zunger1994statics,van2002alloy,mueller2009bayesian,aangqvist2019icet,seko2006prediction} that in typical applications has hundreds to thousands of data points and a dozen to hundreds of inferred parameters. The prototypical application of the cluster expansion is predicting the formation enthalpy of an alloy as a function of elemental composition and configuration $\vec\sigma$. Eq.~\eqref{eq:CE} is a sum over different bonds (pairwise, three-way, etc) for every site in the crystal, an intuitive expression of physical chemistry.
\begin{multline}
\label{eq:CE}
E(\vec\sigma)  = J_0+\sum_{i} J_1 \xi_i + \sum_{\beta}^{\textrm{pairs}} \sum_{i,j} J_\beta \xi_i\xi_j +\\\sum_{\gamma}^{\textrm{triplets}}\sum_{i,j,k} J_\gamma \xi_i\xi_j\xi_k
+\sum_\nu^{\textrm{quads}}\sum_{i,j,k,l} J_4 \xi_i\xi_j\xi_k\xi_l + \cdots \\
= \sum_{\alpha}^\mathrm{clusters}J_{\alpha} \Phi_{\alpha}(\vec \sigma)\qquad
\end{multline}
where the ``bond'' indices run over all the possible sites, pairs of sites, triples, and so on.
The $J$'s are the ``bond strengths'' (inferred parameters, analogous to the $\theta$s in the notation above). $\vec\sigma$ is a vector of integers, the $i$-th component representing the type of atom sitting on the $i$-th lattice site.  The products of $\xi(\vec\sigma)$ functions\footnote{The \emph{site functions} $\xi$ themselves are usually discrete Cheybschev polynomials or a Fourier basis. Any functions that form an orthonormal set over the discrete values of $\sigma_i$ are suitable.} form an orthogonal basis
$\{\Phi_\alpha\}_\alpha$ in the discrete vector space of all possible atomic configurations. 

This model has a physically intuitive interpretation as representing chemical bonds between groups of atoms. For example, a product of two functions, $\xi_i\xi_j$,  represents a pairwise interaction between atoms on sites $i$ and $j$. The strength of the interaction is the magnitude 
$|J_{ij}|$, and the sign of $J_{ij}$ determines whether like or unlike atoms prefer to be ${ij}$-neighbors. 

The CE interactions $J_\alpha$ are typically inferred from quantum-mechanical energies. There are no obvious strategies for picking which alloy configurations to include in the training set. As to this question---the horizontal partitioning of $\bbM$ (deciding the which rows of $\bbM$ should be in $\MTM$ and which should be in $\MVM$ in Eq.~\eqref{eq:partition})---one often included the ``usual suspects,'' configurations that often occur in actual alloys; but this rarely provides enough information to generate a model with small generalization error.  

Choosing which basis functions to include in the model is even more difficult; it is difficult to know which physical interactions are negligible. (The vertical partition of $\bbM$ in Eq.~\eqref{eq:partition} divides the important interactions from those that are assumed to be negligible.) Many different strategies to address these two challenges have been employed in the CE community \cite{mueller2009bayesian,lerch2009uncle,aangqvist2019icet,hart2005evolutionary,PhysRevB.72.165113,seko2009cluster,nelson2013cluster,nelson2013compressive,PhysRevB.99.134206,avdw:atat,puchala2013thermodynamics,leong2019robust}.

Using the generalized aliasing decomposition, CE practitioners can now reason more effectively about how to make these two difficult choices---which configurations to sample and which basis functions to include in the modeling. Furthermore, the GAD elegantly explains the complex risk curves of a typical alloy system (see, e.g., Figure~\ref{fig:CEexample}). 
One can enumerate all possible configurations (up to some maximum number of atoms) \cite{hart2008algorithm,hart2009generating,hart2012generating}, identifying all the rows of the universe matrix $\bbM$ for this problem.  (In principle the number of rows is countably infinite, but under mild assumptions, one can enumerate all configurations up to a size that effectively includes all configurations that are likely to appear in nature.) It is also feasible to determine a complete set of basis functions \cite{sanchez1993cluster} for the enumerated configurations. \footnote{Until recently, enumerating all linearly independent basis functions, without also generating many linearly dependent basis functions, was an outstanding problem. This new algorithm is not yet published.} 

We demonstrate in the following study of a Pt-Cu alloy.
Choosing a realistic model size, we explain the resulting generalization curve through the lens of the GAD.
A binary alloy model containing up to ten unique atomic sites has 2346 unique configurations.\footnote{This formation enthalpy data were generated by ``unrelaxed'' Density Functional Theory calculations for configurations of platinum and copper, which were then adjusted by a linear regression using slight regularization to smooth out noise.} Figure \ref{fig:CEexample} shows the norms of the aliasing matrix, the model insufficiency, and the data insufficiency as a function of increasing basis size for a fixed number of training points.

Unlike the Fourier example, where a natural ordering is obvious, in this setting the ``right'' ordering is not clear. 
At first, we impose no assumptions about natural ordering to either the data or the basis functions parameters, randomizing rows and columns of $\bbM$. This is similar to the random feature models in Section~\ref{sec:RandomExamples}. 
The risk behavior (left panel of Fig.~\ref{fig:CEexample}) exhibits the prototypical double descent. Before the interpolation threshold, the behavior of the risk curve is the expected U-shape of the classical bias--variance trade-off. And beyond the interpolation threshold, the risk drops again.
As predicted in the random feature discussion in Section~\ref{sec:overmodeling}, the lowest-risk model is not at the classical sweet spot but in the asymptotic limit as $m\to \infty$. 

But cluster expansion practitioners do have some intuition about the natural order for the sample points and basis functions.
In their preferred ordering for cluster expansion, pair-wise interactions precede triplet interactions, and all triplet interactions come before any quadruplets, and so forth. Furthermore, the terms are ordered in each
class by diameter---short pairs before long pairs, small-diameter triplets before extended triplets, etc. 
This ordering is motivated by physical arguments that the strongest interactions are short-range and low body-order. For the ordering of the sample points (atomic configurations, defining rows of $\bbM$), there is the coarse guideline of ordering by ``size,'' denoted by the number of atoms in each configuration, but within each size class a natural ordering is not obvious.

This natural ordering of $n$-body/short-long was used to arrange columns and rows in $\bbM$ for the risk curve shown in the right panel of Fig.~\ref{fig:CEexample}. The aliasing $\|\bbA\|$ and the risk move essentially in unison and show a complicated behavior, neither the typical U-shape of classical bias--variance trade-off nor the basic double descent. Rather, the generalization curve has multiple peaks and valleys, whose positions correspond to locations where added basis functions transition from linear independence to linear dependence (marked by the yellow ``indicator function'' at the top of the plot).  Vertical dash-dot lines are included to clarify the connection between peaks in the operator norm $\|\bbA\|$ (dashed blue) and peaks in the risk (solid black).
 Surprisingly (for the bias--variance paradigm), the interpolation threshold does not seem to play any role in the generalization curve for this naturally ordered case. 

The peaks in the operator norm $\|\bbA\|$ for the naturally ordered case (right pane of Fig.~\ref{fig:CEexample}) suggest a simple improvement to ordering the columns of $\bbM$. The aliasing norm suggests that the lowest generalization error will happen around the 50-th parameter or near the 300-th parameter. Between these two, there is a group of parameters that drastically increase the aliasing (and  likely the error as well). By re-ordering the first 500 parameters, swapping the high-risk group with the group in the second ``valley'', the empirical risk will have a deep, broad valley for the first few hundred parameters. This gives practitioners a generous range of model sizes that avoid unexpected spikes in the empirical risk. 

Finally, note also that for the naturally ordered case, the optimal risk occurs at a classical ``sweet-spot,'' with a low number of parameters, and is better (lower) than the optimal risk in the randomly ordered case, which occurs in the asymptotic limit as the number of parameters grows large.

\section{Discussion}\label{sec:discussion}

We have demonstrated the utility of the GAD for explaining complicated, non-monotonic risk curves in a variety of different settings.  Now we turn our attention to using the GAD for improved model development and the pursuit of more efficient and accurate representations of the data.  We will discuss the impact this decomposition has on modeling and sampling decisions, and the influence those decisions have on the shape of generalized risk curves.

\subsection{General Insights into Modeling}

The formal analysis provided above, as well as the examples demonstrating the GAD give practical, intuitive guidance for formulating models which we discuss here.

\subsubsection{Choosing the Basis}
If the $n$ training points are known and fixed, a modeler can control the norm of $\MTM^+$ and $\bbA$ (and hence generically control the magnitude of the risk) by strategically choosing the basis functions, without knowing anything about the labels $\by$.

For example, consider what happens when we choose the first $n$ basis functions $\phi_k$ so that, when evaluated at the points $\bt_1, \dots, \bt_n$, the resulting vectors $\bphi_k = (\phi_k(\bt_1), \dots, \phi_k(\bt_n))$ are orthonormal. 
If the columns of $\MTM$ are the first $m \le n$ of these vectors, then the inverse $\MTM^+$ always has induced norm $\|\MTM^+\| = 1$.  In this situation the norm is constant as $m$ increases up to $n$; and then for $m>n$, no matter which additional columns are added, the norm $\|\MTM^+\|$ cannot increase and will eventually shrink to $0$ (almost surely).  
Thus, the product $\|\MTM^+\| \|\MTU\|$ in the upper bound 
\[
\|\bbA\| \le \|\MTM^+\| \|\MTU\|
\]
on the norm of $\bbA$ also can never increase with $m$.  Given a prior on the coefficients $\btheta$, if the basis functions are ordered to reflect the expected magnitudes of the corresponding coefficients, then we expect there to be no peak in $\|\bbA\btheta_\U\|$ at all---only descent.

In the discrete Fourier series example of Section~\ref{sec:Fourier}, the norm of $\bbA$ is always $1$ and does not decrease to $0$ because the columns of $\M$ are specially tuned to the training set to make $\MTU$ consist of infinitely many copies of $\MTM$.
This aligning of the basis functions to sample points explains why extreme over-fitting is rarely a problem in discrete Fourier transforms, in spite of it being formally equivalent to ordinary least squares regression at the interpolation threshold.

\subsubsection{Choosing Training Points}
If the basis functions are given and fixed, but the modeler has control over the choice of the training points, then they can control the norm $\|\bbA\|$ by strategically choosing the points $\bt_1, \dots, \bt_n$.  Again, this requires no knowledge of the labels $\by$.

For example, consider the case of fitting polynomial functions on the interval $[-1,1]$ with the Legendre basis, consisting of polynomials  $\{P_k\}_{k\in \N}$ which are orthogonal with respect to the inner product $\langle f, g \rangle = \int_{-1}^1 f(t) g(t) \, dt$, with  $P_k$ of degree $k$ and $P_k(1)=1$ for all $k$.  For a given number $m$ of model parameters (the first $m$ Legendre polynomials), if we are able to choose $n$ points at which to evaluate the basis functions, then choosing the points to be the $n$ Legendre--Gauss points, which are the zeros of $P_n$, gives much better results than choosing the points randomly (drawn uniformly).  This is shown in Figure~\ref{fig:Legendre}, where the randomly chosen training points make $\|\bbA\|$ many orders of magnitude larger than with the specially chosen Legendre--Gauss points.  
In this case a judicious choice of training points makes a huge difference to $\|\bbA\|$.  Except for very special choices of $\btheta$, this means the risk Eq.~\eqref{eq:GAD-risk} will also be substantially larger when the model is trained on random points than when it is trained on the specially chosen Legendre--Gauss points.

\begin{figure}
    \begin{center}
        \includegraphics[width=0.475\textwidth]{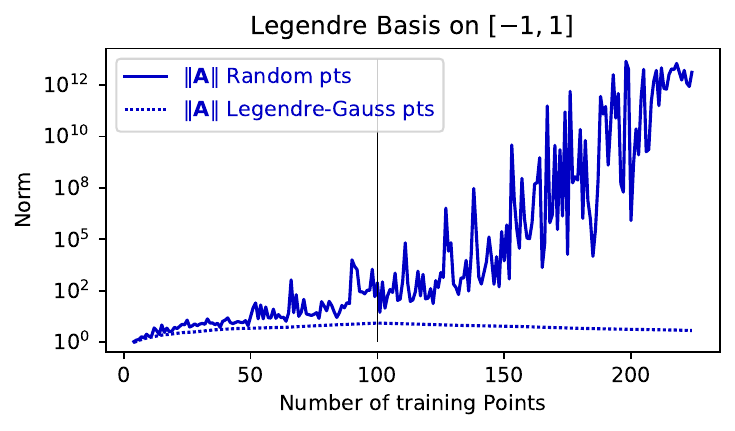}
    \end{center}
    \caption{The induced norm $\|{\bbA}\|$ of the aliasing operator for the Legendre polynomial basis with the model consisting of the first $m=100$ Legendre polynomials. The norms are plotted as functions of the number $n$ of training points, and the vertical black line indicates the interpolation threshold (note that this is inverted from the plots in the previous figures where $n$ is fixed and $m$ is variable).  The solid blue shows the norm of the aliasing operator when the training points are chosen randomly (drawn uniformly from $[-1,1]$), while the dotted blue shows the norm of the aliasing operator when the $n$ training points are chosen to be the Legendre--Gauss points (the zeros of the $n$th Legendre polynomial). The norm $\|\bbA\|$  for randomly chosen training points rapidly grows to be many orders of magnitude larger than for the Legendre--Gauss points.}
    \label{fig:Legendre}
\end{figure}

\subsubsection{Conditioning of \texorpdfstring{$\bbM$}{M}}
\label{sec:conditioning}

If $\bbM$ is poorly conditioned, then it is possible to have a relatively small error $\bbE_{\btheta}\btheta$ in the parameters that corresponds to a large error in the signal (large risk).  Thus it is desirable to select a basis that makes the full transformation $\bbM$ well conditioned. 

For polynomial approximation with the standard monomial basis $\{1,t,t^2,\dots\}$, the transformation $\bbM$ is a generalized Vandermonde matrix, which is very badly conditioned and generally should not be used with real-valued inputs \footnote{The Vandermonde matrix \emph{is} well conditioned in the special case that the inputs $t$ all lie on the unit circle $U^1 = \{t \in \C :  |z| = 1\}$, but it is badly conditioned if the inputs $t$ do not have unit modulus.}.  
But polynomial approximation for real inputs in the interval $[-1,1]$ is well conditioned with the Chebyshev polynomial basis or the Legendre polynomial basis.

\subsection{Regularization}

It has been observed that $L^2$-regularization (ridge regression) generally reduces the size of the peak in risk at the interpolation threshold, but it can also increase risk elsewhere along the curve \cite{MeiMontanari, yilmaz, nakkiran}.  This can be understood in terms of the impact of regularization on the GAD and the pseudoinverse of the design matrix.

For a given decomposition $\Theta = \M \oplus \U$ of the space $\Theta$ with $m =\dim \M$ model parameters, ridge regression  amounts to changing the objective from minimizing risk to minimizing
\begin{align}
\frac{1}{n}\| \by - \MTM \btheta_{\M}\|_2^2 + \lambda \| \btheta_{\M} \|_2^2, 
\end{align}
where $n$ is the number of training points and $\lambda$ is a user-chosen parameter.
It is straightforward to verify that the objective to minimize with $L_2$-regularization can be written as 
$\frac{1}{n}\left\| \widetilde{\by} - \tMTM \btheta_{\M} \right\|_2^2,\label{eq:2-regularize-as-LM}$
where $\tMTM = \left[\begin{smallmatrix}\MTM \\ \sqrt{n\lambda} \bbI_m\end{smallmatrix}\right]$ and $\widetilde{\by} = \left[\begin{smallmatrix}\by\\ \0 \end{smallmatrix}\right]$.  This changes the GAD to  to 
\[
\widetilde{\bbE}_{\btheta} = \begin{bmatrix}
I_\M  - \tMTM^+ \MTM & -\tMTM^+ \MTM\\
0 & I_\U
\end{bmatrix},
\]
so aliasing $\bbA$ becomes $\bbAt = \tMTM^+ \MTU$, and data insufficiency $\bbPN$ becomes $\bbPNt = I_\M  - \tMTM^+ \MTM$, while $\bbIU$ remains unchanged.
Expanding the product 
\[
\tMTM \tMTM\trp = \MTM\MTM\trp + n\lambda \bbI_m,
\] 
shows that every eigenvalue of $\MTM\MTM\trp$ is now increased by $n\lambda$ in this product.  Therefore, the singular values of $\MTM$ are all increased by $\sqrt{n\lambda}$ in $\tMTM$, and 
\begin{align*}
\|\tMTM^+\| 
&= \frac{1}{\frac{1}{\|{\MTM}^+\|} + \sqrt{n\lambda}}\\
&= \frac{\|{\MTM}^+\|}{1 + \sqrt{n\lambda} \|{\MTM}^+\|} \le \frac{1}{\sqrt{n \lambda}}.
\end{align*}

This bound is independent of both $m$ and $\MTM$, and 
it essentially removes the impact of any small singular values of $\MTM$ on the norms of $\tMTM^+$ and $\bbAt$. This explains why there is no significant peak in the risk at the interpolation threshold (or anywhere else, for that matter) for $L_2$-regularized (ridge regression) problems, provided $\lambda$ is sufficiently large. 
If $\sqrt{n \lambda} > \|\MTU\|$, then the norm of $\bbAt$ is smaller than the norms of data insufficiency and model insufficiency, which then dominate the parameter error.

The contribution $\|\bbPNt \btheta_\M|$ of data insufficiency to parameter error, however, can increase with regularization because it is no longer the projection of $\btheta_\M$ onto the null space $\K$ of $\MTM$ or $\tMTM$ but instead is 
\[
\|\bbPNt \btheta_\M\| = \|(\bbI_\M - \tMTM^+ \MTM) \btheta_\M\|.
\]
When $\lambda$ is large, the fact that  $\|\tMTM^+ \MTM\|\le \frac{\|\MTM\|}{\sqrt{n \lambda}}$, means that the data insufficiency operator approaches $\bbI_M$ and the contribution to parameter error from data insufficiency approaches $\|\btheta_\M\|$, which is generally larger than the projection $\|\bbPN \btheta_\M\|$ onto the kernel $\K$ of $\MTM$. 

Nevertheless, the norm of $\bbPNt$, while no longer necessarily bounded by $1$, is still bounded by 
\[
\|\bbPNt\| \le 1 + \frac{\|\MTM\|}{\sqrt{n\lambda}}.
\]

\subsection{How to think about the unmodeled signal  \texorpdfstring{$\MTU$?}{MTU}}
\label{sec:nescience}

The fundamental ansatz of generalized aliasing is the decomposition in Eq.~\eqref{eq:partition} that decomposes the signal into both modeled and unmodeled components.
Our conception of unmodeled signal is similar to ``noise'' as understood in classical and modern statistics.
Indeed, in comparing Eqs.~\eqref{eq:regression} and \eqref{eq:regression_nescience}, the unmodeled modes naively correspond to the noise in classical regression.
This decomposition may initially seem unnatural since any unmodeled components are unknown and consequently, difficult to reason about.
For some readers, this decomposition may seem an unnecessary introduction at best or an untractable complication at worst.
However, the concept is useful for distinguishing nuances in the modeling processes that are obscured by the traditional conception of statistical noise.

First, the heart of the GAD is recognizing that model representations are embedded within a universal function space.
This way of thinking is strongly motivated by signal processing, in which a signal is often assumed to have contributions from all modes, even if only a subset can be extracted from a finite sampling.
This enables us to quantify the trade-off between the modeled and the unmodeled contributions and their relative informativity.
In this way, the GAD naturally quantifies the intuition that as the model capacity grows, unmodeled signal necessarily shrinks.
In contrast, in the classical formulation, noise is modeled as a random variable whose scale parameter is not necessarily tied to the complexity of the model except as a tunable hyperparameter.
Thus, by quantifying the trade-off between the modeled and the unmodeled, we quantify the informational relationship between the data and the model.

Furthermore, recognizing the unmodeled allows flexibility in solving problems where random variables are not the natural representation.
For example, approximating the solution to differential equations is formally equivalent to regression.
However, considerable information is known about the analytic nature of these solutions, and it is often more natural to represent the unmodeled piece as another continuous signal from a set for which there is no natural measure.
In many applications, such as robust control, one is interested in worst-case scenarios.
In such cases, one takes the extremal case over the allowed set of unmodeled signals rather than an expectation value over a random variable.

Finally, our conception of unmodeled signal encompasses any limitations in representing either models or data.
Something as insipid as finite-precision arithmetic is a form of unmodeled signal that is not commonly equated with statistical ``noise.''
For example, consider the representation of a band-limited signal.
The Fourier sampling theorem guarantees its finite Fourier representation can be reconstructed from finite samples.
And when the signal is sampled at generic, random points
with an infinite precision representation, such a signal can still be exactly reconstructed.
In finite precision, however, the represented signal is no longer band-limited: round-off error introduces small, high frequency contributions.
Even if the high-frequency components introduced by the finite precision are bounded, aliasing greatly magnifies their impact on the reconstructed signal, and  the ill-conditioning of the aliasing operator leads to large errors in the inferred signal.

This final point reflects a much deeper philosophical issue when modeling data, which we summarize as \emph{fidelity} and \emph{sensitivity to representation.}
The concept of fidelity can be understood as the extent to which a representation is faithful to the real physical process.
Again, consider the example of Fourier analysis.
The utility of Fourier representations are often attributed to the fact that smooth functions have rapidly decaying Fourier series.
Consequently, a truncated Fourier representation of a smooth signal is faithful to the truth, in the sense that they are nearby in Fourier space.
Although the truncated series is formally wrong, its representational error is bounded.

In contrast, it is often possible to apply inaccurate approximations to models that nevertheless make accurate predictions.
When this occurs, a model exhibits insensitivity to the representation.
Arguably, the most famous and important example of this is the concept of \emph{irrelevance} in renormalization in statistical physics.
In renormalization, approximations are made not because they are accurate but because they do not affect observables of interest.
In Kadanoff's block-spin renormalization of the Ising model, groups of spins are aggregated into a single block-spin; that is, they are approximated as being perfectly correlated.
While such approximations are inaccurate for modeling spin correlations, they lead to good approximations of phase transitions.
The details of microscopic correlations are said to be \emph{irrelevant} to macroscopic observables.
On the other hand, the phase diagram is very sensitive to the so-called \emph{relevant} parameters, such as the applied field or temperature.
Small variations in these variables significantly impact the macroscopic order parameter.

The concepts of relevance in renormalization and sensitivity to unmodeled signals in generalized aliasing have conceptual similarity.
A Fourier representation reconstructed from uniformly spaced samples is useful, not only because it is dominated by low frequency modes, but also because the reconstructed signal is insensitive to high-frequency, unmodeled contributions.
In contrast, Fourier series from random samples exhibit strong sensitivity to unmodeled modes that distorts the coarse trends in the reconstructed signal.
In the language of renormalization, high-frequency modes are irrelevant for uniform samples, but random samples render the high-frequency modes relevant.

More recent work has informed similar conclusions about the general nature of predictive modeling.
Within the formalism of so-called \emph{sloppy models}, microscopic details of complicated, multi-parameter models can be safely ignored because observables of interest are insensitive to large variations in these parameters \cite{machta2013parameter,transtrum2015perspective, quinn2022information}.
Indeed, it has been found that many useful approximations may be derived by taking parameters to extreme values \cite{transtrum2014model}.
Even more fundamentally, evolutionary psychology has shown that psychological representations that maximize fitness are often not faithful to physical reality \cite{hoffman2015interface}.
That is, human psychological representations of reality may be dictated more by the sensitivity of evolutionary fitness to the representation than by fidelity to reality.
All of this suggests that when building a physical model, sensitivity to the unmodeled must be accounted for at least as much as fidelity to known physics.  

\subsection{Outlook}
\label{sec:outlook}

Successful model building involves numerous technical decisions related to the selection of model class, experimental design, learning algorithm, regularization, and other factors that can strongly impact the model's predictive performance.
Best practices are more often art, tuned to experience, rather than science guided by formal reasoning.
The generalized aliasing decomposition (Eq.~\eqref{eq:GAD}) facilitates reasoning about key modeling decisions in a way that is both formal and intuitive.
In the context of linear regression, the approach is fully rigorous while imbuing practitioners with intuition about model performance in both the classical and modern regimes.
Because the aliasing operator norm can be computed without knowing labels, practitioners can also make informed choices about data collection and experimental design for target applications.

Although our formal analysis has been restricted to linear regression, there are reasons to be optimistic that the core approach generalizes to the nonlinear regime.
First, the concepts of aliasing and invertibility (or projection to the kernel) extend formally to nonlinear operators and can be approximated through local linearization.
Furthermore, many cases of practical importance may be tractable in the present framework.
Results for weak, quadratic nonlinearities already exist for pseudospectral methods in partial differential equations \cite{orszag1971elimination}.
Neural tangent kernel techniques demonstrate that wide networks are linear in their models throughout training \cite{lee2020wide}.
In addition, information geometry techniques applied to large, sloppy models have shown that most nonlinearity is ``parameter-effects'' and removable, in principle, through an appropriate, nonlinear reparameterization \cite{transtrum2011geometry}.

An important open question is: Under what conditions is the asymptotic risk less than that of the classical ``sweet spot''?
The preceding analysis has sharpened that question to: When will data insufficiency be larger than the error at the classical sweet spot?
While this remains an open question in general, we have begun to explore it for two broad cases.  
Random feature models, such as in Figure~\ref{fig:belkin-MM-plot}, but presumably also neural networks and other machine learning models, often do not exhibit large data insufficiency and are generically most effective in the over-parameterized, modern regime.
In contrast, we have argued that physics-based models are most effective in the classical regime, where they leverage prior knowledge.

Framing the question in this way clarifies why classical statistics historically missed these interesting phenomena, in spite of the essential elements being known to diverse communities for decades \cite{LoogVieringMeyKrijtheTax2020}.
It also apparently partitions predictive modeling into two philosophically distinct camps: physical models using classical statistics and unstructured models in the modern, interpolating regime.
In our cluster expansion example, the former approach gave the model with the least risk.
Although perhaps expected, as physics-based modeling leverages prior information, this benefit comes after considerable effort from the materials science community.
However, it remains unclear if these are inherently irreconcilable philosophies or two points on a broad landscape just beginning to be explored.

Indeed, our work demonstrates how the theoretical and technical challenges posed by modern data science overlap with those in other fields, including signal processing, control theory, and statistical physics.
We hope that the perspectives advanced here will inspire theorists and practitioners alike to better understand and leverage the relationship between data science and the broader scientific milieu.

\acknowledgments{MKT was supported in part by the US NSF under awards DMR-1753357 and ECCS-2223985. 
GLWH was supported in part by the Chan-Zuckerberg Initiative's Imaging program.
JPW was partially supported by NSF grant DMS-2206762 and CCF-343286}


\bibliography{references} 




\end{document}